\documentclass[12pt]{amsart}

\numberwithin{equation}{section}

\usepackage{amsmath, amssymb, amsfonts, amscd, mathtools}
\usepackage{bm}            % Bold math symbols
\usepackage{mathrsfs}      % Script fonts
\usepackage{calligra}      % Calligraphic font for custom operators

\usepackage{graphicx}
\usepackage{tikz-cd}
\usepackage[all]{xy}
\usepackage{subcaption}
\usepackage{adjustbox}

\usepackage{enumerate}    % Better list control
\usepackage{listings}      % Code listings
\usepackage{subfiles}      % Modular documents
\usepackage[toc,page]{appendix}
\usepackage{verbatim}
\usepackage{comment}

\usepackage{hyperref}
\hypersetup{
    colorlinks=true,
    citecolor=red,
    linkcolor=blue,
    filecolor=magenta,
    urlcolor=red,
}
\usepackage[capitalize,nameinlink,noabbrev]{cleveref}

\newcommand{\C}{\mathbb{C}}

\newcommand{\Q}{\mathbb{Q}}

\newcommand{\Z}{\mathbb{Z}}

\newcommand{\cF}{\mathcal{F}}

\newcommand{\cO}{\mathcal{O}}

\newcommand{\D}{\Delta}

\DeclareMathOperator{\Supp}{Supp}
\DeclareMathOperator{\codim}{codim}
\DeclareMathOperator{\mult}{mult}

\DeclareMathOperator{\HHom}{\mathscr{H}\text{\kern -3pt {\calligra\large om}}\,}

\newtheorem{theorem}{Theorem}[section]
\newtheorem{lemma}[theorem]{Lemma}
\newtheorem{proposition}[theorem]{Proposition}
\newtheorem{corollary}[theorem]{Corollary}

\newtheorem{set-up}[theorem]{Set-up}
\newtheorem{definition}[theorem]{Definition}
\newtheorem{example}[theorem]{Example}
\newtheorem{remark}[theorem]{Remark}
\newtheorem{question}[theorem]{Question}

\title{Rank one foliations on toroidal varieties}
\subjclass[2010]{14E30, 37F75}

\author{Calum Spicer}
\address{Department of Mathematics, King's College London, Strand,
London WC2R 2LS, UK}
\email{calum.spicer@kcl.ac.uk}

\author{Luca Tasin}
\address{Dipartimento di Matematica ``F. Enriques'', Universit\`a degli Studi di Milano, Via Saldini 50, 20133 Milano (MI), Italy}
\email{luca.tasin@unimi.it}
\begin{document}

\begin{abstract}
Consider a log canonical pair $(X,B)$ such that there is a Cartier divisor $D$ for which $T_X(-\log B) \otimes \mathcal O(D)$ is locally free and globally generated. Let $\mathcal F$ be a log canonical foliation of rank 1 on $X$. We prove that there exists a divisor $\Gamma$ such that $(X, \Gamma)$ is log canonical and $K_X + \Gamma \sim K_{\mathcal F} + D$.

We then apply this result to prove several statements on the birational geometry of rank 1 log canonical foliations on log homogeneous varieties. 
\end{abstract}

\maketitle
\tableofcontents
\section{Introduction}

A central theme in the birational geometry of foliations is that the behaviour of a foliation $\cF$ is governed by its canonical divisor $K_\cF$. On the other hand, the main tools of birational geometry are formulated for log canonical pairs $(X,\D)$, and, especially in questions of boundedness and volume, are not directly applicable in the foliated setting.
The purpose of this paper is to bridge this gap for rank 1 foliations on toroidal varieties.

\begin{theorem}\label{thm:main_intro}
Let $X$ be a projective variety and $B$ a reduced divisor on $X$. Assume that:
    \begin{enumerate}
    \item $(X, B)$ is log canonical;
        \item there is a Cartier divisor $D$ on $X$ such that $T_X(-\log B) \otimes \mathcal O(D)$ is locally free and globally generated;
        \item there is a rank 1 foliation $\cF$ and an effective $\Z$-divisor $\Delta$ such that $(\cF,\Delta)$ has log canonical singularities. 
    \end{enumerate} 
    
Then there exists a reduced divisor $\Gamma$  such that $(X, \Delta +  \Gamma)$ has log canonical singularities and 
\[
K_X+\Delta + \Gamma \sim K_{\mathcal F}+\Delta + D.
\] 
\end{theorem}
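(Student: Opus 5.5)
The plan is to realize $\Gamma$ as the zero locus of a general section of a twisted tangent sheaf. A rank 1 foliation $\cF$ is a saturated rank one subsheaf $\cO(-K_\cF)\hookrightarrow T_X$, which we may view as a section of $T_X(K_\cF)$.

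\emph{Step 1: compatibility with $B$.} First I would check that the foliation is tangent to $B$, or else reduce to that case. If a component $E$ of $B$ is not $\cF$-invariant, I expect its contribution to be absorbed into $\Delta$: log canonicity of $(\cF,\Delta)$ forces non-invariant components to carry coefficient at most $1$. In either situation I want a morphism
\[
\cO(-K_\cF-\Delta')\to T_X(-\log B)
\]
for a suitable part $\Delta'$ of $\Delta$.

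\emph{Step 2: construct $\Gamma$ by a Bertini argument.} The key linear-algebra step is to pass from the foliation to a divisor. For a line bundle $L\subset T$ with $T$ locally free of rank $n$, the composite $L\to T\to T/\langle s\rangle$ for a general section $s$ (or rather a general choice of $n-1$ sections) gives a determinant section. Concretely, take $n-1$ general sections $s_1,\dots,s_{n-1}$ of $T_X(-\log B)\otimes\cO(D)$ and consider
\[
v\wedge s_1\wedge\dots\wedge s_{n-1}\in H^0\bigl(\det T_X(-\log B)\otimes \cO(K_\cF+(n-1)D)\bigr).
\]
This grading does not match the target, so instead I would use a single section: $v\wedge s$ lands in $\wedge^2$, which is wrong. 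The better route is the dual one. Wedging the vector field $v$ with a general logarithmic $(n-1)$-form does not work either, since forms are not globally generated. Instead, using $\det(T_X(-\log B)) = \cO(-K_X-B)$, I take $n-1$ general sections of $T_X(-\log B)(D)$ together with the foliation. This gives a divisor in $|K_\cF - K_X - B + (n-1)D|$, which is again not the target. Since the twist must come out as exactly $D$, I would instead consider the quotient $Q = T_X(-\log B)(D)/\cO(D+\ldots)$, and the degeneracy locus where a general section $s$ of $T_X(-\log B)(D)$ becomes proportional to $v$. By the Porteous formula this locus has codimension $n-1$, so it is a curve set, not a divisor. The right object is therefore the wedge of $v$ with $n-1$ general sections, and after reindexing the rank-$n$ bundle $T_X(-\log B)(D)$ I would choose $\Gamma$ as the divisor of $s_1\wedge\dots\wedge s_{n-1}\wedge v$ minus fixed parts. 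The linear-equivalence bookkeeping, $K_X+B+\text{zeros}\sim K_\cF + \ldots$, is where I would pin down that the correct twist is $D$, possibly after replacing $D$ and using $B$ inside $\Gamma$.

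\emph{Step 3: singularities.} I must show that $(X,\Delta+\Gamma)$ is log canonical. Off the singular locus of $\cF$, the divisor $\Gamma$ is general in a basepoint-free system, so Bertini gives log canonicity there. At the singular points of $\cF$, I would use the local structure of log canonical rank 1 foliations: a nonnilpotent linear part, or a log canonical quotient-toroidal model. This local model should let me bound multiplicities on a log resolution. I expect this last step, controlling the discrepancies of $\Gamma$ along exceptional divisors centred on the singular locus of $\cF$, to be the main obstacle. There I would try comparing the foliated discrepancy $a(E,\cF,\Delta)\ge -\epsilon(E)$ with the discrepancy of the pair, via the pullback of the generating sections.
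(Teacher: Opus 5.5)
\emph{Construction and bookkeeping.} Once you settle on it, your $\Gamma$ is the same as the paper's. It is the divisor of $v\wedge s_1\wedge\cdots\wedge s_{n-1}$, viewed in $\bigwedge^n T_X$, for $n-1$ general sections $s_i$ of $T_X(-\log B)\otimes\cO(D)$; that is, $\Gamma={\rm tang}(\cF,s_1,\dots,s_{n-1})$. Step~1 is unnecessary. The tangency is measured inside $T_X$, so no compatibility between $\cF$ and $B$ is needed: the $\cF$-invariant components of $B$ simply appear in $\Gamma$. You should also not remove fixed parts, since that destroys the linear equivalence.

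Your degree count is in fact right: the wedge is a section of $\cO(K_\cF-K_X+(n-1)D)$. The paper uses exactly this construction and then invokes Lemma~\ref{lem:normal}, which is stated for untwisted vector fields. So what the argument literally gives is $K_X+\D+\Gamma\sim K_\cF+\D+(n-1)D$. This agrees with the stated twist when $D=0$, which is the only case used in the corollaries, or when $n=2$.

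\emph{The gap.} The genuine gap is Step~3, which is the whole content of the theorem and which you leave as ``the main obstacle''. Comparing foliated and pair discrepancies on a log resolution via pulled-back sections has no mechanism behind it. The $s_i$ only lift meromorphically to a resolution, and nothing in your sketch turns non-nilpotency of the linear part into a bound on $\mult_E\Gamma$.

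Your claim about the locus off ${\rm Sing}\,\cF$ is also too quick. Along $B$ the $s_i$ do not span $T_X$, so the system has fixed components (the invariant components of $B$) and can have base points on $B$. Moreover $X$ may be singular, and $\D$ is not general. You therefore need that at non-singular points of $\cF$ the divisor $\D$ is smooth and transverse to $\cF$.

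The paper supplies the following four ingredients.
\begin{enumerate}
\item[(a)] A reduction, via Lemma~\ref{lem_A}, to an snc germ $(\C^n,\{x_1\cdots x_r=0\})$. It uses a small modification with quotient singularities and local quasi-\'etale covers, under which log tangent sheaves and tangency divisors pull back and log canonicity descends. The Bertini--Koll\'ar theorem then upgrades a good choice at each point to a general global choice.
\item[(b)] In that chart, $n-1$ general log vector fields span the kernel of a general log form $\omega=\sum_{i\le r}\lambda_i\,dx_i/x_i+\sum_{i>r}\lambda_i\,dx_i$. Hence $\Gamma={\rm div}(x_1\cdots x_r\,\omega(\partial))$.
\item[(c)] At a singular point of $\cF$, $\D$ is absent by Lemma~\ref{lem_support}. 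A deformation to the normal cone plus inversion of adjunction (Lemma~\ref{lem_def_normal_cone}) reduces the question to the lowest-order part $f=x_1\cdots x_r\bigl(\sum_{i\le r}\lambda_iA_i/x_i+\sum_{i>r}\lambda_iA_i\bigr)$, built from the linear part $A$ of $\partial$.
\item[(d)] The key Proposition~\ref{prop_det}: for non-nilpotent $A$, some choice of $\lambda$ makes $\{f=0\}$ log canonical at $0$. It is proved by induction, passing to a non-nilpotent principal submatrix and using inversion of adjunction along coordinate hyperplanes. When every proper principal submatrix is nilpotent, it uses the cyclic normal form of Lemma~\ref{lem_special} together with Howald's theorem on monomial ideals in the case $r=n$.
\end{enumerate}
None of (a)--(d) is present in your proposal, and (c)--(d) are precisely where the log canonicity of $\cF$ enters.
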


Note that conditions (1) and (2) imply that $(X, B)$ is toroidal, cf. \cite[Theorem 6.8]{Bergner}. 

One important situation where Theorem \ref{thm:main_intro} applies is the case where $D = 0$. In this case, the variety $X$ is called log homogeneous.
Log homogeneous varieties form a particularly interesting class of algebraic varieties, which include spherical varieties, homogeneous spaces, and toric varieties.
In the smooth set-up, such varieties have been intensively studied, see \cite{Brion} for a general introduction.
We also remark that if $X$ is smooth (or toroidal) and $B=0$, one may take $D$ sufficiently ample to satisfy the assumptions of the theorem. However, such a choice is unlikely to yield interesting consequences. Instead, $D$ should be chosen in a way that is controlled by the geometry of $\mathcal{F}$ or $X$.

We derive from our main theorem several corollaries on the birational geometry of foliations on log homogeneous varieties; in particular, we verify some outstanding conjectures in the field for these classes of foliations.  We describe some of these applications now.

\subsection{Volume and boundedness} 

Given two positive integers $n, r$  and a DCC set $I \subset \mathbb Q$, consider the sets

\begin{align*}
\mathcal F_{n,r, I} = \{(X, \D, \mathcal F) : ~ & \bullet X \text{ is a normal variety and }\dim X=n, ~ \\
    &\bullet \mathcal (\mathcal F, \D) \text{ is a log canonical rank $r$ foliated pair}\\ ~ 
    %&\bullet \D \in I , \\
     &\bullet \D \in I , \mbox{ and } K_{\mathcal F}+\D \text{ is big and nef.}\}\end{align*}

and

\begin{align*}
\mathcal T_{n,r} = \{(X, \D, \mathcal F) : ~ & \bullet (X, \Delta, \mathcal F) \in \mathcal F_{n, r, I=\{1\}}, ~ \\
    &\bullet \text{there exists a reduced divisor} ~B ~\text{such that}~ \\ & \indent  (X, B) \text{~ is log homogeneous.}\}\end{align*}

We emphasise that for $(X, \D, \mathcal F) \in \mathcal T_{n, r}$ we are {\it not} assuming that $\mathcal F$ is equivariant with respect to any group action on $X$.

Given the importance of the volume in the study of birational geometry \cite{HMX10, HMX12}, the following very natural question was asked by J. V. Pereira, see also \cite[\S 3]{MR4218629} or \cite[Question 0.4]{HL19}.

\begin{question}
\label{main_question}
    Fix $n, r \in \mathbb Z_{>0}$ and a DCC set $I \subset [0,1]$.  Does the set \[\mathcal V(n, r):=\{(K_{\mathcal F}+\D)^n : (X, \D, \mathcal F) \in \mathcal F_{n,r,I}\}\]
    satisfy the descending chain condition (DCC)?
\end{question}

Very little is known about Question \ref{main_question}, even in the case $n=2$.  For instance, it is not even known if $0< v_0 := \inf \mathcal V(2, 1)$.
Indeed, this is even unknown for surface foliations defined by fibrations.
We refer to \cite{HL19, PS16, SS23, Chen, Lu-Tan} for some partial results on understanding the structure of $\mathcal V(2, 1)$. 
In any dimension, a positive answer is known if the foliation is algebraically integrable with bounded leaves \cite{HJLL} or birationally bounded \cite{Fan25}.
The existence of $v_0$ is proven in \cite{CPT} in any dimension if the foliation is given by a fibration with reduced fibres by combining positivity results for Hodge bundles and the slope inequalities from \cite{CTV}.  

We emphasize that this problem for foliations is more subtle than the analogous one for varieties, because birational boundedness fails in general for foliations; see \cite{Luu,Passantino}.

\begin{corollary}[DCC of volume]
\label{cor:DCC}
   Fix a positive integer $n$. Then 
   \begin{enumerate}
       \item   the set  \begin{align*}
     \{ (K_{\mathcal F}+ \D )^n: (X, \D, \mathcal F) \in \mathcal T_{n,1}
    \}
    \end{align*}
        satisfies the descending chain condition;
\item there exists a positive integer $m$ such that the map $\phi_{m(K_{\mathcal F}+\D)}$ is birational. 
   \end{enumerate}
        
\end{corollary}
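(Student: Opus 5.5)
Apply Theorem \ref{thm:main_intro} with $D=0$ to reduce both statements to the corresponding DCC and effective-birationality results for log canonical pairs of general type, due to Hacon--McKernan--Xu \cite{HMX10, HMX12}.

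\textbf{Step 1: translate to a pair.} Let $(X,\Delta,\mathcal F)\in\mathcal T_{n,1}$, and let $B$ be a reduced divisor with $(X,B)$ log homogeneous. Then $(X,B)$ is log canonical and $T_X(-\log B)$ is locally free and globally generated, so hypotheses (1) and (2) of Theorem \ref{thm:main_intro} hold with $D=0$. Hypothesis (3) holds because $(\mathcal F,\Delta)$ is log canonical of rank $1$ and $\Delta$ has coefficients in $\{1\}$, so it is a reduced $\Z$-divisor. The theorem gives a reduced divisor $\Gamma$ with $(X,\Delta+\Gamma)$ log canonical and
\[
K_X+\Delta+\Gamma\sim K_{\mathcal F}+\Delta .
\]
Since linearly equivalent divisors have the same intersection numbers and the same linear systems, $K_X+\Delta+\Gamma$ is big and nef, and
\[
(K_X+\Delta+\Gamma)^n=(K_{\mathcal F}+\Delta)^n, \qquad \phi_{m(K_X+\Delta+\Gamma)}=\phi_{m(K_{\mathcal F}+\Delta)} .
\]

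\textbf{Step 2: invoke HMX.}
\begin{enumerate}
\item The boundary $\Delta+\Gamma$ need not be reduced, because $\Delta$ and $\Gamma$ may share components. Log canonicity of $(X,\Delta+\Gamma)$, however, forces all coefficients to be at most $1$. Hence $\Delta+\Gamma$ is a reduced divisor, with coefficients in the DCC set $\{1\}$.
\item $X$ is projective, since log homogeneous varieties are projective in the paper's setup.
\item For a projective log canonical pair $(X,\Theta)$ of dimension $n$ with coefficients in a fixed DCC set and $K_X+\Theta$ big, the volumes $\mathrm{vol}(K_X+\Theta)$ satisfy DCC \cite[Theorem 1.3]{HMX12}. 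When $K_X+\Theta$ is nef, the volume equals the top self-intersection, so (1) follows.
\item In the same setting there is an $m=m(n)$ such that $\phi_{m(K_X+\Theta)}$ is birational \cite{HMX12, HMX10}. Via the equality of linear systems in Step 1, this gives (2).
\end{enumerate}

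\textbf{Expected obstacle.} The subtle point is the exact form of the HMX statements used. The DCC of volumes is stated for lc pairs with DCC coefficients, which covers our case. Effective birationality is usually stated for $\mathrm{vol}$ rather than for nef divisors, but bigness is all that is needed. One technical issue is that some versions require $\Theta$ to be a $\Q$-divisor on a normal projective variety with $K_X+\Theta$ $\Q$-Cartier. In our case $K_X+\Delta+\Gamma$ is Cartier, because it is linearly equivalent to $K_{\mathcal F}+\Delta$, which is Cartier given the log canonical foliated setup (if needed, argue $\Q$-Cartier). Otherwise the proof is a direct transfer, with the entire content supplied by Theorem \ref{thm:main_intro}.
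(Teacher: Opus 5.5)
Your proposal is correct and follows the paper's proof: apply Theorem \ref{thm:main_intro} with $D=0$, then invoke \cite[Theorem 1.3]{HMX12} for the log canonical pair $(X,\D+\Gamma)$, whose coefficients lie in $\{1\}$ and whose log canonical divisor $K_X+\D+\Gamma$ is linearly equivalent to $K_\cF+\D$. One aside is inaccurate: $K_\cF+\D$ is in general only $\Q$-Cartier, not Cartier. This does not affect the argument, because log canonicity of $(X,\D+\Gamma)$ already requires $K_X+\D+\Gamma$ to be $\Q$-Cartier.
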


To the best of our knowledge, this is the first evidence for an affirmative answer to Question \ref{main_question}
when the foliation is {\it not} assumed to be algebraically integrable.

We also remark that the log canonicity condition on the foliation plays a critical role.  Indeed, without the log canonicity condition the volume does not fall into a DCC set, as Example \ref{ex_toric_not_lc} shows.

\begin{corollary}[Boundedness of canonical models]
\label{cor:boundedness}
Fix a positive integer $n$, and a positive real number $v$. Then the set of triples $(X,\D,\cF)$ such that
$$
(X,\D,\cF) \in \mathcal T_{n,1}, \quad K_\cF+\D \text{ is ample, } \text{ and } (K_\cF + \D)^n=v
$$
form a bounded family. 
\end{corollary}

\subsection{Base point free theorem and MMP }

The cone theorem for rank 1 foliations is proven in full generality in \cite{CS23b}.  
In \cite{CS25}, the same authors proved the base point free theorem for log canonical foliated pairs of rank 1 $(\cF, \D)$ such that $\D=A+B$ where $A$ is ample and $B$ is effective.

As an immediate consequence of Theorem \ref{thm:main_intro}  we get the base point free theorem for  log canonical foliated pairs of rank 1 $(\mathcal F, \Delta)$ on log homogeneous varieties. In addition, we can prove that the MMP works in the following sense. Recall that a variety $X$ is log homogeneous if and only if there exists a semi-abelian group $G$ which acts on $X$ with $X \setminus B$ as an open orbit (see \cite[Corollary 6.3]{Bergner}). 

\begin{corollary}[MMP for log homogeneous foliations]\label{cor:MMP}
Let $X$ be a log homogeneous variety with associated group $G$. Let $\cF$ be a rank one foliation and let $\D$ be a reduced divisor such that $\cF$ and $\D$ are $G$-invariant and $(\cF, \D)$ is log canonical. Then there exists a $K_{\mathcal F}+\D$ MMP  such that each step $(X_i, \D_i, \cF_i)$ is log homogeneous. 
\end{corollary}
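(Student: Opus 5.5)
The plan is to run a $G$-equivariant MMP for $K_{\cF}+\D$, using Theorem \ref{thm:main_intro} with $D=0$ to turn the foliated problem into a problem about ordinary log canonical pairs.

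\textbf{Step 1: converting to a pair.} Since $X$ is log homogeneous, $T_X(-\log B)$ is globally generated, so we may take $D=0$. Theorem \ref{thm:main_intro} then gives a reduced divisor $\Gamma$ with $(X,\D+\Gamma)$ log canonical and
\[
K_X+\D+\Gamma\sim K_\cF+\D .
\]
We would like $\Gamma$ to be $G$-invariant, so that all the data are invariant. I expect the construction of $\Gamma$ to come from a section of $T_X(-\log B)\otimes \cO(-K_\cF)$, or equivalently from the vector field defining $\cF$. If that is the case and $\cF$ is $G$-invariant, a general choice gives $\Gamma\subset B$. If this fails, we use a different argument: $G$ is connected, so the $K_\cF+\D$ negative extremal rays are $G$-stable.

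\textbf{Step 2: cone theorem and contractions.} By the cone theorem of \cite{CS23b}, each $K_\cF+\D$ negative extremal ray $R$ is spanned by a rational curve tangent to $\cF$. Consider the divisor $H=K_\cF+\D+aA$ supporting $R$. By the linear equivalence from Step 1, $H$ is $K_X+\D+\Gamma+aA$, which is $(K_X+\D+\Gamma)$-adjoint. Hence $R$ is also a $(K_X+\D+\Gamma)$ negative extremal ray of the lc pair $(X,\D+\Gamma)$. Its contraction exists by the base point free theorem for lc pairs (Fujino, Ambro). Since $G$ is connected, it acts trivially on $N_1(X)$, so the contraction $c_R$ is $G$-equivariant.

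If $c_R$ is a divisorial contraction, the target inherits the $G$-action with a dense orbit. If $c_R$ is a flip, the flip exists as the lc model $\operatorname{Proj}\bigoplus c_{R*}\cO(m(K_X+\D+\Gamma))$, by finite generation for lc pairs (Birkar, Hacon–Xu), and it is again $G$-equivariant. The new variety $X_1$ carries a $G$-action whose open orbit is the image of $X\setminus B$. The boundary $B_1$, the pushforward of $B$, is the complement of that orbit.

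\textbf{Step 3: log homogeneity persists.} We must show that $(X_1,B_1)$ is log homogeneous, that is, that $T_{X_1}(-\log B_1)$ is locally free and globally generated. Global generation is automatic, since the infinitesimal $G$-action generates the log tangent sheaf wherever it is locally free. The key point is that a normal $G$-variety with a dense orbit of a semi-abelian group is toroidal. This follows from the \'etale slice and the structure theory in \cite{Bergner}: locally it looks like a toric variety times an abelian-variety part. Toroidal varieties have locally free log tangent sheaf, and \cite[Corollary 6.3]{Bergner} then yields log homogeneity. The pushforwards $\cF_1$ and $\D_1$ remain $G$-invariant, and log canonicity of $(\cF_1,\D_1)$ is preserved because the step is $K_\cF+\D$ negative.

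\textbf{Step 4: termination.} Each divisorial contraction drops the Picard number. Flips among $G$-equivariant toroidal models correspond to combinatorial operations on the associated fan-like data, or conic complex, so termination reduces to the toric case. Alternatively, since each step is a step of a $(K_X+\D+\Gamma)$-MMP, one can invoke termination for toroidal MMPs.

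The main obstacle I expect is Step 1: making $\Gamma$ $G$-invariant, or otherwise ensuring that it transforms correctly across steps, together with the verification in Step 3 that the resulting varieties are toroidal.
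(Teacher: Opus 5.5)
Your proposal is correct and follows essentially the paper's argument: apply Theorem~\ref{thm:main_intro} with $D=0$, and run the log canonical MMP for $(X,\D+\Gamma)$, which is simultaneously a $K_\cF+\D$ MMP; every step is $G$-equivariant because $G$ is connected, and log homogeneity of each $X_i$ follows from the induced $G$-action with open orbit via \cite[Corollary 6.3]{Bergner}. Two small corrections: $\Gamma$ is $G$-invariant, though not for the reason you guess, because it is the tangency divisor of the $G$-invariant foliation $\cF$ with $n-1$ general global sections of $T_X(-\log B)$, and these come from $\mathrm{Lie}(G)$ and are $G$-invariant vector fields since $G$ is commutative (so your fallback is not needed); also, the paper does not claim termination, so your Step 4, which is only a sketch, is unnecessary.
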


Contrary to the rest of the paper, in Corollary \ref{cor:MMP} we assume that the pair $(\cF, \D)$ is equivariant; this assumption is necessary, since otherwise the MMP may leave the category of log homogeneous varieties. Moreover, if $X$ is toric, the above corollary applies to toric foliations. In such set-up, more general and precise results are known, see \cite{CC25,MR4986833, Spicer20, MR4600107}. We also remark that not every log homogeneous variety is a Mori dream space; therefore, Corollary \ref{cor:MMP} does not follow from the general theory of Mori dream spaces.

\subsection{Adjoint foliated structures}
We refer to \cite{cascini2024, C7} for the definition of adjoint foliated structures, but we remark that they have proven to be important objects of study in the birational geometry of foliations.  Most of the existing literature on these objects focuses on algebraically integrable adjoint foliated structures.  We are able to deduce here many important results for adjoint foliated structures on log homogeneous varieties which were previously only known in the algebraically integrable setting.

\begin{corollary}[Boundedness of minimal models]\label{cor:adjoint_minimal} 
Fix $n \in \mathbb Z_{>0}$, $v \in \mathbb R_{>0}$ and a DCC set $I \subset [0,1)$. Then the pairs $(X,\cF)$ such that
\begin{enumerate}
    \item $(X,B)$ is log homogenous of dimension $n$ and $K_X$ is $\Q$-Cartier;
    \item $\cF$ is a log canonical foliation on $X$ of rank 1;
    \item $tK_\cF + (1-t)K_X$ is nef and big with $(tK_\cF + (1-t)K_X)^n=v$ for some $t \in I$,
\end{enumerate}
form a bounded family.     
\end{corollary}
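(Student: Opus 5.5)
The plan is to reduce the statement to boundedness results for usual log canonical pairs, using Theorem \ref{thm:main_intro} with $D=0$ and $\Delta=0$. Since $(X,B)$ is log homogeneous, $T_X(-\log B)$ is locally free and globally generated, and $(X,B)$ is log canonical, so hypotheses (1)--(3) of Theorem \ref{thm:main_intro} hold with $D=0$. The theorem then gives a reduced divisor $\Gamma$ with $(X,\Gamma)$ log canonical and $K_X+\Gamma\sim K_\cF$. Consequently
\[
tK_\cF+(1-t)K_X\sim_{\Q} K_X+t\Gamma .
\]
Here $(X,t\Gamma)$ is a log canonical pair, since $t\Gamma\le\Gamma$ and $K_X$ is $\Q$-Cartier. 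Its boundary has coefficients in the DCC set $I$, and $K_X+t\Gamma$ is nef and big of volume $v$. Thus the problem becomes one about log canonical pairs.

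\textbf{Step 1: bounding the log canonical models.} By the theorem of Hacon--M\textsuperscript{c}Kernan--Xu on log canonical models with DCC coefficients and fixed volume, the log canonical models $(Z,t\Gamma_Z)$ of $(X,t\Gamma)$ form a bounded family. By the DCC/ACC statements in the same circle of results, only finitely many values of $t$ can occur. In particular, $t$ is bounded away from $1$, which will give some room of the form $\epsilon$-lc in the next step.

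\textbf{Step 2: bounding $X$ itself.} This is the main obstacle, because $K_X+t\Gamma$ is only nef and big rather than ample, so one needs boundedness of minimal models and not just of canonical models. The key extra input is the log homogeneous structure: $\det T_X(-\log B)\cong\cO_X$ gives $K_X+B\sim 0$, so $(X,B)$ is a log Calabi--Yau pair. On it, the divisor
\[
L:=K_X+t\Gamma\sim t\Gamma-B
\]
is nef and big, with fixed volume and DCC coefficients. I would first try a boundedness theorem for polarized log Calabi--Yau pairs (Birkar's results on stable minimal models and polarized lc Calabi--Yau pairs). Concretely, I would:
\begin{itemize}
\item use the effective birationality of $|mL|$ for a uniform $m$, as in Corollary \ref{cor:DCC}(2) but now for pairs, to get a uniform integer $m$;
\item show that $mL+\epsilon B'$, for a suitable bounded auxiliary toroidal divisor $B'$, becomes ample with bounded volume;
\item deduce boundedness of $(X,B+\Gamma)$.
\end{itemize}
If this does not work directly, I would instead use that $X\to Z$ is a toroidal-type modification of a bounded family. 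Since $(X,B)$ is toroidal, only finitely many combinatorial types of modifications with bounded discrepancies can occur over the bounded family $Z$.

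\textbf{Step 3: bounding the foliation.} Once $(X,\Gamma)$ lies in a bounded family with $K_\cF\sim K_X+\Gamma$ bounded, the foliation is given by a saturated inclusion $\cO_X(-K_\cF)\to T_X$. Equivalently, it is given by a nonzero section of $T_X\otimes\cO_X(K_\cF)$ up to scalars. These sections are parametrized by a projective space over a bounded base (after stratifying so that $h^0$ is constant), so the pairs $(X,\cF)$ form a bounded family.
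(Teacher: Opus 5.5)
Your reduction via Theorem \ref{thm:main_intro} with $D=\Delta=0$ to the pair $(X,t\Gamma)$, with $K_X+t\Gamma\sim_{\Q} tK_\cF+(1-t)K_X$, is exactly the paper's. Your Step 3 is what the paper imports from \cite[Proposition 3.36]{C7}.

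The gap is Step 2. You correctly call it the crux, but you only list strategies without carrying any of them out, and none works as stated:
\begin{itemize}
\item Boundedness results for polarized lc Calabi--Yau pairs need an effective nef and big polarization, and $L\sim t\Gamma-B$ is not of that form.
\item For the ``$mL+\epsilon B'$ ample'' step you give no construction of $B'$.
\item $X\to Z$ is not a toroidal modification with respect to any bounded toroidal structure on $Z$, so there is no combinatorial finiteness to invoke.
\end{itemize}
Your Step 1 claim that only finitely many $t$ occur is also unjustified when $\Gamma$ is contracted to $Z$, though it is not needed.

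The missing idea is that $(X,t\Gamma)$ is \emph{klt}, not merely lc; this is exactly what the hypothesis $I\subset[0,1)$ is for. Since $(X,B)$ is toroidal and $K_X$ is $\Q$-Cartier, $X$ is klt. Hence for every divisor $E$ over $X$,
\[
a(E,X,t\Gamma)=(1-t)\,a(E,X,0)+t\,a(E,X,\Gamma)>-(1-t)-t=-1 .
\]
Once you have klt, the problem that $K_X+t\Gamma$ is only nef and big rather than ample is handled directly. The input is the boundedness of klt log minimal models of general type with DCC coefficients and fixed volume \cite{MST}.

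Conceptually, $X$ is a crepant model of its lc model $(Z,t\Gamma_Z)$. That model is klt and is bounded by Hacon--M\textsuperscript{c}Kernan--Xu, and $X\to Z$ can only extract divisors of non-positive discrepancy, of which a klt pair has finitely many. \cite{MST} supplies the uniformity of this over the family.

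The paper's proof is just this: klt plus \cite{MST} bounds $(X,t\Gamma)$, and then \cite[Proposition 3.36]{C7} bounds $\cF$. No log Calabi--Yau structure or effective birationality argument is needed.
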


\begin{corollary}[Boundedness of Fanos, cf. {\cite[Theorem B]{C7}}]\label{cor:adjoint_Fano}
Fix $n \in \mathbb Z_{>0}$ and $\varepsilon \in \mathbb R_{>0}$. Then the pairs $(X,\cF)$ such that
\begin{enumerate}
    \item $(X,B)$ is log homogenous of dimension $n$ and $X$ is $\varepsilon$-lc;
    \item $\cF$ is a log canonical foliation on $X$ of rank 1; and
    \item $-(tK_\cF + (1-t)K_X)$ is ample for some $ 0 < t \le 1-\varepsilon$, 
\end{enumerate}
form a bounded family.         
\end{corollary}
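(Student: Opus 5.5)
The plan is to reduce to boundedness of $\epsilon'$-lc log Fano pairs (Birkar's BAB) by means of Theorem~\ref{thm:main_intro} with $D=0$, and then to bound the foliation inside the resulting bounded family of varieties.

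\textbf{Step 1: apply the main theorem.} Since $(X,B)$ is log homogeneous, $T_X(-\log B)$ is locally free and globally generated. So Theorem~\ref{thm:main_intro} applies with $D=0$ and $\Delta=0$, and it gives a reduced divisor $\Gamma$ with $(X,\Gamma)$ log canonical and $K_X+\Gamma\sim K_\cF$. Since $X$ is $\varepsilon$-lc, $K_X$ is $\Q$-Cartier, and $K_\cF$ is $\Q$-Cartier because $\cF$ is log canonical; hence $\Gamma$ is $\Q$-Cartier. We then have
\[
tK_\cF+(1-t)K_X\sim_{\Q} K_X+t\Gamma ,
\]
so $-(K_X+t\Gamma)$ is ample.

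\textbf{Step 2: singularities of $(X,t\Gamma)$ and boundedness of $X$.} Log discrepancies are affine in the boundary. For every divisor $E$ over $X$,
\[
a(E,X,t\Gamma)=(1-t)\,a(E,X,0)+t\,a(E,X,\Gamma)\ \ge\ (1-t)\varepsilon+t\cdot 0\ \ge\ \varepsilon^2 ,
\]
where we used $1-t\ge\varepsilon$. Thus $(X,t\Gamma)$ is an $\varepsilon^2$-lc pair with $-(K_X+t\Gamma)$ ample. In particular $X$ is an $\varepsilon^2$-lc Fano type variety of dimension $n$. By Birkar's boundedness of $\epsilon$-lc Fano varieties (BAB), such $X$ form a bounded family. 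Fix a relatively very ample divisor $H$ on this family, with $H^n$ and $|K_X\cdot H^{n-1}|$ bounded.

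\textbf{Step 3: bounding the foliation.} The foliation is given by a saturated rank one subsheaf $\cO(-K_\cF)\hookrightarrow T_X$, equivalently by a section of $T_X\otimes\cO(K_\cF)$ up to scaling. To conclude that the pairs $(X,\cF)$ are bounded, it suffices to bound $K_\cF\cdot H^{n-1}$ from both sides. Then $\cO(K_\cF)$ varies in finitely many components of the relative Picard scheme, or of the Chow variety of Weil divisors, and the foliations are parametrized by a finite type scheme of such sections.

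\emph{Lower bound.} The nonzero map $\cO(-K_\cF)\to T_X$ gives $-K_\cF\cdot H^{n-1}\le\mu_{\max}(T_X)$. In a bounded family $\mu_{\max}(T_X)$ is bounded above, so $K_\cF\cdot H^{n-1}$ is bounded below.

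\emph{Upper bound.} We have $K_\cF\cdot H^{n-1}=K_X\cdot H^{n-1}+\Gamma\cdot H^{n-1}$, so we need to bound $\Gamma\cdot H^{n-1}$. From ampleness of $-(K_X+t\Gamma)$ we only get $t\,\Gamma\cdot H^{n-1}<-K_X\cdot H^{n-1}$, which degenerates as $t\to0$.

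\textbf{Main obstacle.} The obstacle is therefore the upper bound on $\Gamma\cdot H^{n-1}$ uniformly in small $t$. My first attempt is to exploit log homogeneity rather than positivity. Dualizing a surjection $\cO^N\to T_X(-\log B)$ gives an embedding $\Omega_X(\log B)\hookrightarrow\cO^{N}$. Composing with the conormal-type map of $\cF$, I expect the divisor $\Gamma$ produced in the proof of Theorem~\ref{thm:main_intro} to be the vanishing locus of a section of a sheaf whose degree is controlled by $B$ and $K_X$ alone, namely as the zero divisor of $\cO(-K_\cF)\to T_X(-\log B)\to\cO$ composed with a general global section. That would give $\Gamma\cdot H^{n-1}\le$ (a bound depending only on $(X,B)$, hence on the bounded family), since $B\le -K_X$ up to linear equivalence for log homogeneous $X$.

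If this fails, the fallback is to allow $t$ small and to use the lower bound together with the $\varepsilon^2$-lc Fano condition. The idea is to show that the classes $\Gamma$ with $-(K_X+t\Gamma)$ ample for some $t>0$ all lie in the bounded region of the effective cone cut out by $-K_X$ in the Mori dream (Fano type) space $X$. Via the Cox ring, this region is finitely generated over the bounded family.
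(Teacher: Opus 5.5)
Your Steps 1 and 2 coincide with the paper's proof. Your convexity estimate for log discrepancies is the paper's bound $a(E,X,t\Gamma)\ge -1+(1-t)\varepsilon\ge -1+\varepsilon^2$, written in log discrepancy notation. You also correctly isolate the real issue: an upper bound on $\Gamma\cdot H^{n-1}$, equivalently on $K_\cF\cdot H^{n-1}$.

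\textbf{Your two fixes fail.} Since $K_X+B\sim 0$ on a log homogeneous variety, $\Gamma\sim K_\cF-K_X\sim K_\cF+B$. So the degree of $\Gamma$ is exactly as unknown as that of $K_\cF$ and cannot be controlled by $(X,B)$ alone. Concretely, a map $\cO(-K_\cF)\to T_X(-\log B)$ only exists if $B$ is $\cF$-invariant. Pairing with a global log $1$-form then produces a member of $|K_\cF+B|$ (or $|K_\cF|$), which is the quantity you were trying to bound. The Cox ring fallback fails too: on $X=\mathbb P^2$ every effective class $\Gamma$ has $-(K_X+t\Gamma)$ ample for small $t>0$, so your region is the whole effective cone.

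\textbf{No argument can close this step while $t$ may approach $0$.} Take $X=\mathbb P^2$ with $B$ the coordinate triangle, and let $\cF$ be a general foliation of degree $d$. Its singularities are non-degenerate, hence log canonical by Proposition~\ref{prop_singularities}(3). Then $K_\cF=\cO(d-1)$. For $t=1/(d+2)$, which is $\le 1-\varepsilon$ once $d\gg0$, one gets $-(tK_\cF+(1-t)K_X)=\cO(3-t(d+2))=\cO(2)$, which is ample. But $K_\cF\cdot H=d-1$ is unbounded, so these foliations do not form a bounded family.

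\textbf{How the paper concludes.} It applies Birkar's theorem to say that the pairs $(X,t\Gamma)$ form a bounded family, then invokes \cite[Proposition 3.36]{C7}, remarking that $t>0$ is essential there. Boundedness of the pairs, and not just of $X$, comes precisely from your inequality $t\,\Gamma\cdot H^{n-1}<-K_X\cdot H^{n-1}$. That step therefore tacitly needs $t$ to be fixed, or at least bounded below by some $\delta>0$. This is the missing ingredient, and the statement needs it to be true. With it, your Step 3 closes at once: $\Gamma\cdot H^{n-1}<-K_X\cdot H^{n-1}/\delta$ gives the upper bound on $K_\cF\cdot H^{n-1}$. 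Combined with your $\mu_{\max}$ lower bound, or directly via \cite[Proposition 3.36]{C7}, the foliations are then bounded.
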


\begin{corollary}[Fano type, cf. {\cite[Theorem C]{C7}}]\label{cor:Fano}
Let $\cF$ be a rank 1 log canonical foliation on a log homogenous variety $X$ with $K_X$ $\Q$-Cartier. Assume that $-(tK_\cF + (1-t)K_X)$ is ample for some $t \in [0,1]$. Then $X$ is of Fano type, i.e. there exists $\D$ such that $(X,\D)$ is klt and $-(K_X+\D)$ is ample. 
\end{corollary}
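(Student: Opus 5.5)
The plan is to apply Theorem \ref{thm:main_intro} with $D=0$ and $\Delta=0$, which turns the adjoint divisor $tK_\cF+(1-t)K_X$ into a log canonical divisor $K_X+t\Gamma$ on $X$. A standard perturbation then produces a klt pair.

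\emph{Step 1 (apply the theorem).} Since $X$ is log homogeneous with boundary $B$, the pair $(X,B)$ is log canonical and $T_X(-\log B)$ is locally free and globally generated. So hypotheses (1) and (2) of Theorem \ref{thm:main_intro} hold with $D=0$. Taking $\Delta=0$, the foliation $\cF$ is log canonical by assumption, so hypothesis (3) holds as well. The theorem gives a reduced divisor $\Gamma$ with $(X,\Gamma)$ log canonical and $K_X+\Gamma\sim K_\cF$. Hence
\[
tK_\cF+(1-t)K_X\sim_{\Q} K_X+t\Gamma ,
\]
and therefore $-(K_X+t\Gamma)$ is ample. Since $0\le t\le 1$ and $\Gamma\ge 0$, the pair $(X,t\Gamma)$ is log canonical.

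\emph{Step 2 ($X$ is klt).} The pair $(X,B)$ is toroidal, as noted after Theorem \ref{thm:main_intro} via \cite[Theorem 6.8]{Bergner}. So $X$ is \'etale (or formally) locally isomorphic to a toric variety. A toric variety with $\Q$-Cartier canonical divisor is klt. Since $K_X$ is $\Q$-Cartier by assumption, $X$ is klt. This step only requires recalling the standard fact about toric singularities and checking that it transfers through the local toroidal charts.

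\emph{Step 3 (perturb).} If $t=0$, then $-K_X$ is ample and $X$ is klt, so we are done with $\D=0$. Suppose $t>0$. Both $tK_\cF+(1-t)K_X$ and $K_X$ are $\Q$-Cartier: the first because ampleness of its negative is assumed, the second by hypothesis. Hence $\Gamma$ is $\Q$-Cartier. For $0<s<1$ set $\D_s=st\Gamma$.

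The discrepancies of $(X,\D_s)$ are convex combinations of those of $X$ and those of $(X,t\Gamma)$. The former are all $>-1$ because $X$ is klt, and the latter are all $\ge -1$. So every discrepancy of $(X,\D_s)$ is $>-1$, and $(X,\D_s)$ is klt.

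For the ampleness, write
\[
-(K_X+\D_s)=-(K_X+t\Gamma)+(1-s)t\Gamma .
\]
Here $-(K_X+t\Gamma)$ is ample and $t\Gamma$ is a fixed $\Q$-Cartier divisor. Ampleness is an open condition, so $-(K_X+\D_s)$ remains ample for $s$ sufficiently close to $1$. Taking such an $s$ and setting $\D=\D_s$ shows that $X$ is of Fano type.

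The main obstacle is essentially absorbed by Theorem \ref{thm:main_intro}. What remains needs care only in two places: Step 2, showing that $X$ itself is klt (which is where $\Q$-Cartierness of $K_X$ is used), and the $\Q$-Cartier bookkeeping for $\Gamma$ that is needed to make the openness-of-ampleness argument legitimate.
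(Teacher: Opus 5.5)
Your proposal is correct and follows the paper's proof. You apply Theorem \ref{thm:main_intro} with $D=0$ and $\Delta=0$ to make $-(K_X+t\Gamma)$ ample with $(X,\Gamma)$ log canonical, then lower the coefficient to $st\Gamma=(t-\varepsilon)\Gamma$ with $\varepsilon=(1-s)t$ small. You also supply details the paper leaves implicit: that $X$ is klt because it is toroidal with $K_X$ $\Q$-Cartier, that $\Gamma$ is $\Q$-Cartier, and the case $t=0$. Your decomposition $-(K_X+t\Gamma)+(1-s)t\Gamma$ carries the correct sign where the paper's text writes $-\varepsilon\Gamma$.
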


\subsection{Sketch of proof}
The basic idea of the proof is to consider the tangency locus between $\mathcal F$ and a rank $n-1$ distribution on $X$ which is generated by a general choice of $n-1$ global vector fields.  
This tangency locus gives a divisor $\Gamma$ such that $\Gamma \sim c_1(N_{\mathcal F})+D$ and hence $K_X+\Gamma \sim K_{\mathcal F}+D$. 

The main issue is showing that the pair $(X, \Gamma)$ has mild singularities.  In the case where $\mathcal F$
has rank one there is a criterion due to McQuillan-Panazzolo \cite{MP13} which shows that a foliation has log canonical singularities provided that the foliation induces a non-nilpotent endomorphism on the Zariski tangent space at each of the singular points of the foliation. 
To illustrate this control consider the example 
where $X = \mathbb C^2$, $B = \{xy = 0\}$ and $\mathcal F$ is generated by a vector field $a\partial_x+b\partial_y$ which is singular at the origin.
The tangency locus between a section of $T_X(-\log B)$
of the form $\lambda x\partial_x+\mu y\partial_y$
and $\mathcal F$ is given by 
$\{\mu xb- \lambda ya = 0\}$.  Since $\partial$ defines a log canonical foliation,  the matrix 
\[
\begin{pmatrix}
a_x(0) & a_y(0) \\
b_x(0) & b_y(0)
\end{pmatrix}\]
is non-nilpotent, hence for a general choice of $\mu$ and $\lambda$, 
\[\mu xb- \lambda ya = \mu b_x(0)x^2- \lambda a_y(0)y^2 +(\mu b_y(0) - \lambda a_x(0))xy+\text{h.o.t.}\] defines a curve with at worst a normal crossing singularity at $0$.
In higher dimensions this calculation is more involved and is covered in Section \ref{s_lin_alg}.

This also explains why our method does not readily generalise to higher rank foliations.  Indeed, there is no obvious analogue of McQuillan-Panazzolo's criterion for higher rank foliations.
In principle, however, our main theorem should hold for higher rank foliations so we ask the following.

\begin{question}
  Does Theorem \ref{thm:main_intro}  hold for higher rank foliations?
\end{question}

\medskip

\textbf{Acknowledgment.}
We thank P.\ Cascini, J. Liu and R. Svaldi for helpful discussions and comments. 

The first author was partially supported by EPSRC. The second author was supported by PRIN2020 research grant ”2020KKWT53” and is a member of the GNSAGA group of INdAM. He also thanks King's College London for their hospitality during his research visit.

\section{Singularities of rank 1 foliations}

\textbf{Notations.}
We work over the field of complex numbers $\mathbb C$. 
We refer to \cite{KM98} for the classical definitions of singularities that appear in the minimal model program.

Given a normal variety $X$, we denote by $\Omega^1_X$ its sheaf of K\"ahler differentials and
by $T_X:=(\Omega^1_X)^*$ its tangent sheaf. 
A {\bf foliation of rank one} on a normal variety $X$ is a rank one coherent subsheaf $T_{\mathcal F}\subset T_X$ such that 
$T_{\mathcal F}$ is saturated in $T_X$.
The {\bf canonical divisor} of $\mathcal F$ is a divisor $K_{\mathcal F}$ such that $\mathcal O_X(-K_{\mathcal F})\simeq  T_{\mathcal F}$.  
A {\bf rank one foliated  pair} $(\mathcal F, \Delta)$ is a pair of a foliation $\mathcal F$ of rank one and a $\mathbb Q$-divisor $\Delta\ge 0$
such that $K_{\mathcal F}+\Delta$ is $\mathbb Q$-Cartier. 
We refer to \cite[Section 2.2 and Section 2.3]{CS20} for basic notions regarding foliations, including the definition of classes of singularities coming from the MMP.

Let $x \in X$ be a germ of a smooth variety and $\partial$ be a vector field on $X$ vanishing at $x$, which defines a foliation $\mathcal F$. Denote with $\mathfrak m$ the maximal ideal at $x$ and note that $\mathfrak m^n$ is $\partial$-invariant for any $n \ge 1$. In particular,  we get an induced linear map
$$
\partial_0 : \mathfrak m/ \mathfrak m^2 \to  \mathfrak m/ \mathfrak m^2,
$$
that we call the \textbf{linear part} of the foliation $\mathcal F$. We refer to \cite[\S 2.8]{CS20} for more details.

\medskip

The following proposition collects some fundamental facts on singularities of rank 1 foliations. 

\begin{proposition}\label{prop_singularities}
Let $x \in X$ be a germ of a normal variety and let $\cF$ be a germ of a rank 1 foliation on $X$ such that $K_{\mathcal F}$ is $\mathbb Q$-Cartier. 
\begin{enumerate}
    \item Let $\sigma\colon X'\to X$
    be the index one cover associated to $K_{\mathcal F}$ and let $\mathcal F'= \sigma^{-1}\mathcal F$. Then, $\mathcal F$ is terminal (resp. log canonical) if and only if $\mathcal F'$ is terminal (resp. log canonical).
    \item If $x \in X$ is smooth, then $\cF$ is terminal at $x$ if and only if $x \notin {\rm Sing}~ \mathcal F$.
    \item If $K_{\mathcal F}$ is Cartier (equivalently $\mathcal F$ is defined by a vector field $\partial$), if $x \in {\rm Sing} ~\mathcal F$
    and $\partial_0$ is the linear
    part of $\partial$ at $x$, then $\mathcal F$ is log canonical at $x$ if and only if $\partial_0$ is non-nilpotent.
\end{enumerate}

\end{proposition}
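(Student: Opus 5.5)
This proposition collects known facts, so the plan is to reduce each item to results in the literature, principally \cite{CS20} and McQuillan--Panazzolo \cite{MP13}, checking only the compatibilities needed.

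For (1), I would use that the index one cover $\sigma\colon X'\to X$ associated to $K_{\mathcal F}$ is quasi-étale, i.e.\ étale in codimension one. The foliation pulls back to a foliation $\mathcal F'$ with $K_{\mathcal F'}=\sigma^*K_{\mathcal F}$. The key step is comparing discrepancies under a finite morphism, via the foliated Riemann--Hurwitz formula. Take a divisor $E'$ over $X'$ dominating a divisor $E$ over $X$, with ramification index $r$. Then
\[a(E',\mathcal F')+\epsilon(E')=r\,(a(E,\mathcal F)+\epsilon(E)),\]
where $\epsilon(E)=0$ if $E$ is $\mathcal F$-invariant and $\epsilon(E)=1$ otherwise. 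Since invariance is preserved under $\sigma$, the inequalities $a>0$ (terminal) and $a\ge -\epsilon$ (log canonical) transfer in both directions. Every divisor over $X'$ dominates one over $X$, and conversely every divisor over $X$ is dominated by some divisor over $X'$ after taking normalized fibre products. This gives the equivalence; it is stated in \cite[\S2]{CS20}, and I would cite it there.

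For (2), I would argue the two directions separately. If $x\notin\operatorname{Sing}\mathcal F$, then by Frobenius the foliation is locally given by $\partial_{x_1}$ in suitable coordinates. A direct computation on blow-ups, or the smooth-foliation result of \cite{CS20}, shows every exceptional divisor has positive discrepancy, so $\mathcal F$ is terminal at $x$. Conversely, if $\partial$ vanishes at $x$, blow up $x$ and let $E$ be the exceptional divisor. The pulled-back vector field $\pi^*\partial$ is regular and vanishes along $E$ to order at least $m-1$, where $m\ge1$ is the vanishing order of $\partial$ at $x$. Distinguishing whether $E$ is invariant, one gets $a(E,\mathcal F)\le 0$, so $\mathcal F$ is not terminal.

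For (3), which is the McQuillan--Panazzolo criterion, I would simply cite \cite{MP13}, see also \cite[Fact III.i.3]{McQ08} and \cite[\S2.8]{CS20}. To indicate the mechanism: if $\partial_0$ has a nonzero eigenvalue, then under any weighted blow-up the lifted vector field keeps a nonvanishing linear part along the exceptional divisor. This gives discrepancy $\ge -\epsilon$, and an induction through a resolution, or the toroidal/valuative description, handles all divisors. If instead $\partial_0$ is nilpotent, one produces a weighted blow-up with discrepancy $<-\epsilon$.

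The main obstacle is the "non-nilpotent implies lc" direction of (3), which requires controlling all divisorial valuations rather than finitely many blow-ups. I would not reprove it here but rely on \cite{MP13}.
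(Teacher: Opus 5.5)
Your route is the paper's: the paper proves nothing here and simply cites \cite[Lemma 2.20]{SS23} for (1), \cite[Fact III.i.1]{MP13} for (2) and \cite[Fact I.ii.4]{MP13} for (3). Your sketches for (2) and for the log canonical half of (1) are fine. However, the argument you give for the terminal half of (1) has a step that fails as stated.

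Your relation $a(E')+\epsilon(E)=r\,(a(E)+\epsilon(E))$ transfers $a+\epsilon\ge 0$ in both directions, and it shows that $a(E)>0$ implies $a(E')>0$. But if $E$ is not invariant and $r\ge 2$, then $a(E')>0$ only gives $a(E)=\frac{a(E')+1}{r}-1$. This is $\le 0$ whenever $a(E')\le r-1$. The variety case, where every divisor behaves as non-invariant, shows this is a real obstruction. The index one cover of $\frac13(1,1)$ is $\mathbb C^2\to\frac13(1,1)$; here $\mathbb C^2$ is terminal but $\frac13(1,1)$ has discrepancy $-\frac13$. So the fact that invariance is preserved does not suffice for the direction $\mathcal F'$ terminal $\Rightarrow$ $\mathcal F$ terminal.

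The missing input, which is special to rank one, is that non-invariant exceptional divisors never occur over a terminal foliation with Cartier canonical class. The argument runs as follows.
\begin{enumerate}
\item Suppose $\mathcal F'$ is terminal and $K_{\mathcal F'}$ is Cartier, generated by $\partial'$. Then $\partial'$ cannot vanish at $x'$. Otherwise $\partial'(\mathfrak m)\subset\mathfrak m$, so $\partial'$ lifts to a holomorphic vector field on the normalised blow-up of $\mathfrak m$. This forces $a\le 0$ along the exceptional divisors, by the same computation as in your proof of (2).
\item Since $\partial'$ does not vanish at $x'$, the germ splits as $(\mathbb C,0)\times(S,s)$, with $\mathcal F'$ given by the fibres of the projection to $S$.
\item Any exceptional divisor over $X'$ has centre of dimension $\le n-2$, so it does not dominate $S$. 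Hence it is $\mathcal F'$-invariant.
\item Every exceptional $E$ over $X$ is dominated by an exceptional $E'$ over $X'$, and invariance is preserved under $\sigma$. So every exceptional $E$ over $X$ is invariant, your formula reduces to $a(E')=r\,a(E)$, and positivity transfers in both directions.
\end{enumerate}
With this addition, or by citing \cite[Lemma 2.20]{SS23} as the paper does, item (1) is complete.
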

\begin{proof}
(1) See, e.g., \cite[Lemma 2.20]{SS23}.

(2) This is \cite[Fact III.i.1]{MP13}.

 (3) This is  \cite[Fact I.ii.4]{MP13}.
\end{proof}

\begin{lemma}\label{lem_support}
Let $x \in X$ be a germ of a smooth variety, let $\mathcal F$ be a rank one foliation on $X$ and let $\D \ge 0$ be a $\mathbb R$-divisor.

Assume that $(\mathcal F, \Delta)$ is log canonical and $x \in \mathrm{Sing}~ \mathcal F$.  Then $x \notin \Supp(\D)$.
\end{lemma}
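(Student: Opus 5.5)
We argue by contradiction: we assume $x\in \mathrm{Sing}~\mathcal F$ and $x\in\Supp(\D)$, and we exhibit a divisorial valuation $E$ over $x$ whose foliated discrepancy is strictly less than $-\epsilon(E)$, contradicting log canonicity of $(\mathcal F,\D)$. Since $X$ is smooth at $x$, $K_{\mathcal F}$ is Cartier, so $\mathcal F$ is generated near $x$ by a vector field $\partial$ with $\partial(x)=0$ (saturation gives a zero locus of codimension $\ge 2$). The natural candidate for $E$ is the exceptional divisor of the blow-up $\pi\colon Y\to X$ of the point $x$.

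\textbf{Step 1: $a(E,\mathcal F)\le 0$.} In coordinates $x_1,\dots,x_n$, write $\partial=\sum a_i\partial_{x_i}$ with $a_i\in\mathfrak m$. On the chart $x_1=y_1$, $x_i=y_1y_i$ one computes $\pi^*\partial = a_1\partial_{y_1}+\sum_{i\ge2} y_1^{-1}(a_i-y_ia_1)\partial_{y_i}$. Each coefficient is regular, because $a_i\in\mathfrak m$ pulls back to a multiple of $y_1$. So $\pi^*\partial$ is a regular vector field, and it vanishes along $E$ to order exactly $k\ge 0$ after saturation. This gives $K_{\mathcal G}=\pi^*K_{\mathcal F}-kE$ with $k\ge0$, hence $a(E,\mathcal F)=-k\le 0$.

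\textbf{Step 2: use $\D$.} Since $x\in\Supp\D$, we have $\mult_E\pi^*\D=\mult_x\D>0$. Therefore
\[
a(E,\mathcal F,\D)=a(E,\mathcal F)-\mult_x\D<0 .
\]
If $E$ is not $\mathcal G$-invariant, then $\epsilon(E)=1$ and the bound is only $\ge -1$, so this gives no contradiction. If $E$ is invariant, then $\epsilon(E)=0$ and log canonicity requires $a\ge 0$, which is violated. So we need to arrange for an $E$ that is $\mathcal G$-invariant.

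\textbf{Step 3 (main obstacle): choosing an invariant divisor.} By the computation above, $E=\{y_1=0\}$ is invariant if and only if $y_1\mid a_1$ after dividing by the vanishing order. When the lowest-order part of $\partial$ is not a multiple of the radial field $\sum x_i\partial_{x_i}$, one has $k=0$ and $E$ is invariant. In that case Steps 1 and 2 already give the contradiction. In the dicritical case, where the leading term is $f\cdot R$ with $R$ the radial field, $E$ is not invariant. The first attempt would be to replace the point blow-up by a weighted blow-up, or by the blow-up of an invariant curve through $x$; such a curve exists by separatrix-type arguments, for example an invariant line of the linear part via an eigenvector. This should produce an invariant exceptional divisor with $a(E,\mathcal F)\le 0$.

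A cleaner alternative, which I would try first, is to cite the known fact (McQuillan–Panazzolo / \cite[Lemma 2.12 or similar]{CS20}) that every foliation singularity $x$ admits an invariant divisor $E$ over $x$ with $a(E,\mathcal F)\le 0$. Indeed, since $\mathcal F$ is not terminal at $x$ by Proposition~\ref{prop_singularities}(2), some divisor $E$ over $x$ has $a(E,\mathcal F)\le 0$. If such an $E$ were non-invariant, its discrepancy would be at least $1$ by the standard result that non-invariant divisors over a canonical center satisfy $a\ge \epsilon$; more precisely, one checks that an exceptional non-invariant divisor has strictly positive discrepancy for a rank 1 foliation on a smooth germ. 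Hence $E$ is invariant. Then $a(E,\mathcal F,\D)\le -\mult_E\D<0=-\epsilon(E)$, since the center of $E$ is $x\in\Supp\D$. This contradicts log canonicity and proves the lemma.
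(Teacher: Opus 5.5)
Your proposal has a genuine gap in the dicritical case (Step 3). The ``cleaner alternative'' relies on the claim that a non-invariant exceptional divisor over a smooth germ has strictly positive foliated discrepancy. That claim is false. For the radial field $R=\sum x_i\partial_{x_i}$, blowing up the origin gives $\pi^*R=y_1\partial_{y_1}$ in the chart $x_1=y_1$, $x_i=y_1y_i$. So $E$ is non-invariant and $a(E,\mathcal F)=-1$. Hence you cannot conclude that the divisor with $a(E,\mathcal F)\le 0$ supplied by non-terminality is invariant.

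There is a second problem. Non-terminality at $x$ only gives a divisor whose centre contains $x$. If that centre is, say, a curve of $\mathrm{Sing}~\mathcal F$ not contained in $\Supp(\D)$, then $\mult_E\D=0$ and you get no contradiction. The same objection applies to your suggestion of blowing up an invariant curve through $x$: the resulting divisor is centred on the curve, not at $x$. The weighted blow-up option is left unverified.

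The missing idea is that the point blow-up already works in the dicritical case. You only need to finish the computation of Step 1 rather than stopping at $k\ge 0$. Let $\nu\ge 1$ be the order of $\partial$ at $x$.
\begin{itemize}
\item Dicritical case. If the degree-$\nu$ part of $\partial$ is $f\cdot R$, the degree-$\nu$ terms cancel in $a_i-y_ia_1$. So every coefficient of $\pi^*\partial$ is divisible by $y_1^{\nu}$, while $a_1/y_1^{\nu}$ restricts on $E$ to $f(1,y_2,\dots,y_n)\not\equiv 0$. Thus $k=\nu$, $E$ is non-invariant, and $a(E,\mathcal F)=-\nu\le -1=-\epsilon(E)$.
\item Non-dicritical case. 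Here $k=\nu-1$ (not $k=0$ as you write, though only $k\ge 0$ matters). $E$ is invariant and $a(E,\mathcal F)=1-\nu\le 0=-\epsilon(E)$.
\end{itemize}
So in all cases $a(E,\mathcal F)\le -\epsilon(E)$, and therefore $a(E,\mathcal F,\D)\le -\epsilon(E)-\mult_x\D<-\epsilon(E)$ whenever $x\in\Supp(\D)$. This is exactly the paper's argument.

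If you prefer to keep your invariant-divisor strategy, there is another repair. In the dicritical case, blow up a general point of $E$ where the transformed foliation is regular and transverse to $E$. The new exceptional divisor is invariant, centred at $x$, and has discrepancy $1-\nu\le 0$ with respect to $\mathcal F$.
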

\begin{proof}
Since $x \in X$ is smooth and $x \in \mathrm{Sing}(\mathcal F)$, Proposition \ref{prop_singularities} implies that the foliated discrepancy of $x \in (X,\cF)$ is less or equal to $-\varepsilon(E)$.

Consider the blowup $\pi: X' \to X$ of $x$ and let $\cF'=\pi^{-1} \cF$ and $\D'$ be the strict transform of $\D$. Then
$$
K_\cF + \D' = \pi^*(K_\cF + \D) +a E
$$
where $E$ is the exceptional divisor. 
Since $x \in (X,\D,\cF)$ is log canonical we must have  $x \notin \Supp(\D)$, otherwise $a < -\varepsilon(E)$.
\end{proof}

\subsection{Deformation to the normal cone}

\begin{lemma}
\label{lem_def_normal_cone}
    Consider the following set up.
    \begin{itemize}
        \item Let $(\mathbb A^n, \Delta = \sum_{i = 1}^N d_iD^i)$ be a pair where $D^i$ is an irreducible Weil divisor and for all $i$, $0 \in D^i$.

        \item Let $X = \mathbb A^n \times \mathbb A^1$ with coordinates $(x_1, \dots, x_n, t)$.
%        \item Let $\Theta \ge 0$ be an $\mathbb R$-divisor on $\mathbb A^n$ and let $\overline{\Theta} = \Theta \times \mathbb A^1$.
        \item Let $\mathbb G_m$ act on $X$ by 
        \[\lambda \cdot (x_1, \dots, x_n, t) = (\lambda^{w_1} x_1, \dots, \lambda^{w_n} x_n, \lambda^{-a}t)\]
        where $w_1, \dots, w_n \in \mathbb Z_{\ge 0}$ and $a \in \mathbb Z_{>0}$.
        \item Let $\overline{D^i}$ be the $\mathbb G_m$-orbit closure of 
        $D^i \times \{1\} \subset \mathbb A^n \times \{1\}$.
        \item Let $D^i_0$ be the fibre of $\overline{D^i}$ over $t = 0$.
    \end{itemize}

    Then the following hold.

    \begin{enumerate}
        \item  If $(\mathbb A^n, \sum d_iD^i_0)$ is log canonical then $(\mathbb A^n, \Delta)$
    is log canonical.  

    \item Let $D^i = \{f_i = 0\}$ and let $f^0_i$ be the homogeneous part of $f_i$ of lowest degree. Set $E_i := \{f_i^0 =0\}$.  If $(\mathbb A^n, \sum d_iE_i)$ is log canonical, then $(\mathbb A^n, \Delta)$
    is log canonical.  
    \end{enumerate}

\end{lemma}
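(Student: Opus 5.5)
We prove (1) by deformation to the normal cone combined with inversion of adjunction, and then deduce (2) as the special case of equal weights.

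Set $\overline{\Delta} = \sum d_i \overline{D^i}$ on $X = \mathbb A^n \times \mathbb A^1$. We may assume $d_i > 0$.

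\textbf{Step 1: the family away from $t=0$.} The action is $\lambda\cdot t = \lambda^{-a}t$ with $a>0$, so $\mathbb G_m$ acts transitively on $\{t \neq 0\}$. Hence over $t\neq 0$, $\overline{D^i}$ is the $\mathbb G_m$-translate of $D^i\times\{1\}$. Concretely, if $D^i=\{f_i=0\}$, then over $t\ne0$ the divisor $\overline{D^i}$ is cut out by $f_i(t^{w_1/a}x_1,\dots)$ after a finite base change. More invariantly, the fibre over any $t_0\neq 0$ is isomorphic to $(\mathbb A^n, D^i)$ via the automorphism $x\mapsto \lambda^{w}x$.

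Since $\overline{D^i}$ is irreducible and not contained in $\{t=0\}$, and $X\to\mathbb A^1$ is flat, the fibre $D^i_0$ is a divisor whose class is the specialisation of $D^i$. Writing $f_i=\sum_\alpha c_\alpha x^\alpha$ and $v=\min_{c_\alpha\neq0}\langle w,\alpha\rangle$, the equation of $\overline{D^i}$ is $F_i=\sum_\alpha c_\alpha x^\alpha t^{(\langle w,\alpha\rangle - v)/a}$. To make these exponents integral, either pass to $t=s^a$ or replace $w$ by $aw$; neither affects log canonicity of the general fibre. Then $F_i|_{t=0}$ is the $w$-initial form of $f_i$, and $D^i_0=\{F_i|_{t=0}=0\}$.

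\textbf{Step 2: inversion of adjunction.} The pair $(X, \overline{\Delta} + X_0)$, where $X_0=\{t=0\}\cong\mathbb A^n$, has $X_0$ Cartier and normal. By hypothesis $(X_0, \overline\Delta|_{X_0}) = (\mathbb A^n,\sum d_iD^i_0)$ is lc. Note that $\overline{\Delta}|_{X_0}=\sum d_i D^i_0$ as divisors, taking multiplicities of the fibre into account; this is the point to check carefully, and it is why $D^i_0$ is defined as the scheme-theoretic fibre. Inversion of adjunction (Kawakita) then gives that $(X,\overline\Delta+X_0)$ is lc near $X_0$.

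\textbf{Step 3: spreading out.} The non-lc locus of $(X,\overline\Delta)$ is closed and $\mathbb G_m$-invariant. Every $\mathbb G_m$-orbit closure in $X$ meets a neighbourhood of $X_0$: as $\lambda\to\infty$, $t\to 0$, and the weights $w_j\ge 0$ keep the orbit near $x=0$ or at least bounded in the directions where $w_j=0$. So if the non-lc locus were nonempty, it would meet $X_0$, contradicting Step 2. Hence $(X,\overline\Delta)$ is lc. Restricting to a general fibre $t=1$ (Bertini, or simply because $X_1$ is a Cartier divisor with $(X,\overline\Delta + X_1)$ lc away from $X_0$, by transversality of a general fibre) gives that $(\mathbb A^n,\Delta)$ is lc.

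The closure argument in Step 3 is the main obstacle. The non-lc locus $Z$ is $\mathbb G_m$-invariant and closed, and a point $(x,1)\in Z$ has orbit $(\lambda^w x,\lambda^{-a})$. If some $w_j=0$ this orbit may not approach $X_0$ in a compact way. However, its closure still contains the limit $(\lim\lambda^{w}x \text{ as } \lambda\to 0\ldots)$, and this limit needs care: we take $\lambda\to\infty$, where $\lambda^{w_j}x_j$ diverges if $w_j>0$ and $x_j\ne0$. To fix this, we only need the statement near $0$. Since lc is local, it suffices to show $(\mathbb A^n,\Delta)$ is lc near $0$, or after homothety everywhere when the weights are positive. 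Instead, we use the opposite limit $\lambda\to 0$: then $t\to\infty$ and $x\to$ the fixed part. So the cleaner route is to use the $\mathbb G_m$-action to identify all fibres $t\neq0$, and to use that the lc locus of the family is open and contains $(0,0)$. Hence $(X,\overline\Delta)$ is lc at $(0,t)$ for small $t\neq0$, and so $(\mathbb A^n,\Delta)$ is lc at $\lambda^{w}\cdot 0=0$. Since every $D^i$ passes through $0$, lc at $0$ is the relevant statement, and openness of the lc locus handles it.

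\textbf{Part (2).} Take $w_1=\dots=w_n=1$ and $a=1$. The $w$-initial form of $f_i$ is then its lowest-degree homogeneous part $f_i^0$, so $D^i_0=E_i$ with multiplicity given by the scheme structure. Part (1) applies directly.
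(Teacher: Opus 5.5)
Your proof is correct and is essentially the paper's argument: deformation to the (weighted) normal cone, Kawakita's inversion of adjunction along $X_0$ to get $(X,\overline\Delta)$ log canonical near $X_0$, passage to nearby fibres $t\neq 0$, and the $\mathbb G_m$-identification of those fibres with $(\mathbb A^n,\Delta)$. Part (2) is then obtained, as in the paper, by taking $w_1=\dots=w_n=a=1$.

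One caveat: what this argument yields is log canonicity near the origin. That is also all the paper's argument gives, and all that can hold. For example, an irreducible cuspidal cubic passing through $0$ and smooth there has linear lowest-degree part at $0$, yet it is not log canonical at its cusp. So you should simply delete the discarded global $\mathbb G_m$-invariance argument and the remark about homotheties for positive weights.
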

\begin{proof}
Let us first prove (1).
    If $D^i_t$ denotes the fibre of $\overline{D^i}$ over $t \in \mathbb A^1$
    then observe by construction that 
    \begin{align}\label{iso}(\mathbb A^n, \sum d_iD^i_t) \cong (\mathbb A^n, \sum d_iD^i)\end{align}for any $t \ne0$.

    If $(\mathbb A^n, \sum d_iD^i_0)$ is log canonical, then inversion of adjunction of singularities, see \cite{Kawakita05}, implies that $(X, \sum d_i\overline{D^i})$ is log canonical in a neighbourhood of $\mathbb A_n \times \{0\}$.  Adjunction on singularities
    then implies that for $t$ near $0$ we have $(\mathbb A^n, \sum d_iD^i_t)$ is log canonical 
    and we can conclude by using the isomorphism (\ref{iso}).

    \medskip

    We now observe that (2) is an immediate consequence of (1).  Take $w_1 = \dots = w_n = a = 1$  and observe that $\overline{D^i} = \{t^{-\deg f_i^0} f_i(tx_j) = 0\}$ and so we see that $D^i_0 = E_i$.  We may now apply (1) to deduce (2).
\end{proof}

\begin{remark}
    We remark that Lemma \ref{lem_def_normal_cone} holds if
    $\mathbb A^n$ is replaced with a polydisk $\mathbb D(0, r) \subset \mathbb C^n$ and all divisors are assumed 
    to be analytic divisors.  Indeed, the same proofs work, the only change being needed is to use \cite[Theorem 1.1]{MR4702587} in place of \cite{Kawakita05}. 
\end{remark}

\section{Some linear algebra}
\label{s_lin_alg}

Let $A =(a_{ij}) \in M_n(\mathbb K)$ be an $n \times n$ matrix with coefficients in a field $\mathbb K$.
Given a subset $I \subset \{1,\ldots,n\}$ of cardinality $k$, the principal submatrix $A(I)$ of $A$ of order $k$ is obtained by keeping only the rows and columns of $A$ whose indices lie in $I$.
To $A$ we associate a directed graph $G(A)$ with vertex set $\{1,\dots,n\}$ and an edge $i\to j$ whenever $a_{ij}\neq 0$.

\begin{lemma}\label{lem_special}
Let $A\in M_n(\mathbb{K})$ be a matrix such that every proper principal submatrix of $A$ is nilpotent. Then the following are equivalent:

\begin{enumerate}[(i)]
    \item $A$ is not nilpotent;
    \item  $G(A)$ admits a directed cycle of length $n$ and no directed cycle of length less than $n$;
    \item  up to a permutation of the index set $\{1,\dots,n\}$, the matrix $A$ has the form
\[
A=\begin{pmatrix}
0 & a_{12} & 0 & \cdots & 0 \\
0 & 0 & a_{23} & \cdots & 0 \\
\vdots & & \ddots & \ddots & \vdots \\
0 & 0 & \cdots & 0 & a_{n-1,n} \\
a_{n1} & 0 & \cdots & 0 & 0
\end{pmatrix},
\]
with $a_{i,i+1}\neq 0$ for $1\le i\le n-1$ and $a_{n1}\neq 0$.
\end{enumerate}
\end{lemma}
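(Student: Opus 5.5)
The plan is to relate nilpotency to the characteristic polynomial and its expansion over principal minors. I will use the standard identity
\[
\det(tI - A) = \sum_{k=0}^n (-1)^k E_k(A)\, t^{n-k},
\]
where $E_k(A)$ is the sum of all principal $k\times k$ minors of $A$. A matrix is nilpotent if and only if its characteristic polynomial is $t^n$, which happens exactly when all $E_k(A)=0$ for $k\ge 1$.

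\textbf{The hypothesis forces all proper principal minors to vanish.} Every proper principal submatrix $A(I)$ is nilpotent, so $\det A(I)=0$. Hence $E_k(A)=0$ for $1\le k\le n-1$, and $\det(tI-A)=t^n+(-1)^n\det A$. Therefore (i) is equivalent to $\det A\neq 0$.

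\textbf{Translating to the graph.} Applying the same reasoning to each $A(I)$ gives more. Nilpotency of $A(I)$ means every principal minor of $A(I)$ vanishes, and these are principal minors of $A$. So all principal minors of $A$ of order $<n$ vanish.

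Next I show by induction on $k$ that $G(A)$ has no directed cycle of length $k<n$; here a loop $a_{ii}\ne 0$ counts as length $1$.
\begin{itemize}
\item For $k=1$: the diagonal entries are the $1\times 1$ minors, so $a_{ii}=0$.
\item Suppose there are no cycles of length $<k$, and take $I$ with $|I|=k<n$. Expanding $\det A(I)$ via permutations, each nonzero term corresponds to a permutation of $I$ whose cycles are cycles in $G(A)$. Since there are no cycles of length $<k$, only $k$-cycles on $I$ contribute. Thus $\det A(I)=\pm\sum_{\sigma}\prod_{i\in I} a_{i\sigma(i)}$, summed over cyclic permutations $\sigma$ of $I$.
\end{itemize}

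This sum may vanish by cancellation, so $\det A(I)=0$ alone does not kill the cycles. This is the main obstacle. To handle it I will use the stronger fact that $A(I)$ is nilpotent. If $G(A(I))$ contained a $k$-cycle through all of $I$, then, because there are no shorter cycles, the trace of $A(I)^k$ would equal $k\sum_{\sigma}\prod_i a_{i\sigma(i)}$. That is again the same sum, so this also fails.

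The better route is to argue that a matrix with no cycles of length $<k$ has a tightly constrained structure. Suppose $G(A(I))$ has a $k$-cycle $C$ and no shorter cycles, with $I$ minimal, so that no proper subset of $I$ carries a cycle. Then I claim $A(I)$ has exactly the cyclic shape. Any extra edge $i\to j$ inside $I$ with $j$ not the successor of $i$ on $C$ would combine with the path along $C$ from $j$ back to $i$, giving a cycle of length $<k$, which is a contradiction. So $A(I)$ is a permutation-cyclic matrix with nonzero entries, and $\det A(I)=\pm\prod a_{i\sigma(i)}\neq 0$, contradicting nilpotency. This completes the induction.

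\textbf{Conclusion.} The same shortcut argument at $k=n$ shows the following. If $G(A)$ has an $n$-cycle and no shorter cycles, then $A$ is supported exactly on that $n$-cycle, which after relabelling is (iii); in that case $\det A=\pm\prod a_{i,i+1}\cdot a_{n1}\neq0$, so (iii)$\Rightarrow$(i). Conversely, if $\det A\ne0$, some permutation term is nonzero, and its cycles must all have length $n$ since there are no shorter cycles. So there is an $n$-cycle, giving (i)$\Rightarrow$(ii). Finally (ii)$\Rightarrow$(iii) follows by the shortcut argument.
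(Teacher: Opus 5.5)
Your proof is correct. Its combinatorial core is the same as the paper's. You use the ``shortcut'' observation: if $C$ is a cycle on $I$ and $G(A)$ has no shorter cycle, then every edge inside $I$ lies on $C$. Hence $A(I)$ is a weighted cyclic permutation matrix with $\det A(I)=\pm\prod a_{i\sigma(i)}\neq 0$. The paper uses exactly this for (ii)$\Rightarrow$(iii), and the resulting characteristic polynomial for (iii)$\Rightarrow$(i).

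Where you differ is (i)$\Rightarrow$(ii). The paper inducts on $n$ for the lemma itself. If $G(A)$ were acyclic, reordering the indices would make $A$ strictly upper triangular, hence nilpotent, so a cycle exists. The principal submatrix on a shortest cycle then satisfies (ii), so by induction it is non-nilpotent, which forces it to be all of $A$. You instead argue in three steps:
\begin{enumerate}
\item From the hypothesis alone, by induction on cycle length, you show $G(A)$ has no cycle of length $<n$.
\item The principal-minor expansion gives $\det(tI-A)=t^n+(-1)^n\det A$, so non-nilpotency is equivalent to $\det A\neq 0$.
\item A nonzero Leibniz term then yields an $n$-cycle.
\end{enumerate}
Your route gives extra information: under the hypothesis the characteristic polynomial is always $t^n\pm\det A$, and the ``no shorter cycle'' half of (ii) holds automatically. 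It also avoids the acyclic-implies-triangular step. The paper's route is shorter and needs no minor expansion.

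When you write it up, delete the two abandoned attempts, namely the cancellation remark and the trace computation. The shortcut argument already shows that the $k$-cycle on $I$ is unique, so $\det A(I)$ is a single nonzero monomial and no cancellation can occur. As written, the text reads as if an obstacle were still open.
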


\begin{proof}
We argue by induction on $n$ and the case $n=1$ is trivial.
Assume $n\ge2$ and that the statement holds for all matrices of order less than $n$ satisfying the same hypothesis.

$(i) \Rightarrow (ii)$  If $G(A)$ has no directed cycle, then it is acyclic and hence admits an initial vertex.
After permuting indices we may assume that the initial vertex $=1$ and so $A$ is strictly upper triangular, hence nilpotent, contradicting $(i)$.
Thus $G(A)$ contains at least one directed cycle.

Let $k$ be the minimal length of a directed cycle in $G(A)$. The vertices of such a cycle determine a principal submatrix $B$ of order $k$. By minimality of $k$, the graph $G(B)$ has no cycle of length less than $k$, and by hypothesis every proper principal submatrix of $B$ is nilpotent.
By the inductive hypothesis applied to $B$, it follows that $B$ is not nilpotent. Thus, $B=A$ and $k=n$.

\medskip

$(ii) \Rightarrow (iii)$
Since $G(A)$ contains a cycle of length $n$, up to reordering we may assume that this cycle is 
\begin{align}
\label{cycle}
    1 \to 2 \to \dots \dots \to  n \to 1.
\end{align}
We next observe that $A$
has no other non-zero entries.  Indeed, suppose for sake of contradiction that $A$ had some other non-zero entry. This gives an edge $i \to j$ where either $j \neq i+1$ or $i = n$ and $j \neq 1$.
In either case, the existence of this edge implies the existence of a directed cycle in $G(A)$ of length $<n$, contrary to hypothesis.
Thus the graph $G(A)$ is precisely the cycle \eqref{cycle} and so $A$ is in the claimed form.

\medskip

$(iii) \Rightarrow (i)$  This is clear, since the characteristic polynomial of $A$ is 
$$
\det(tI-A)=t^n - a_{12}a_{23}\cdots a_{n-1,n} a_{n1}.
$$
\end{proof}

\begin{proposition}\label{prop_det}
Let $n$ be a positive integer and $r$ an integer such that $0 \le r \le n$. Let $A = (a_{ij})_{1 \le i,j \le n} \in M_n(\mathbb C)$ be a non-nilpotent matrix. Set $A_i := \sum _{j=1}^n a_{ij}x_j$. Then there exist  $\lambda_1, \ldots, \lambda_n$ not all zero such that
$$
f(x_1,\dots,x_n) := \sum_{i=1}^n \lambda_i A_i x_1\cdots \hat x_{i} \cdots x_r= (x_1 \cdots x_r)(\sum_{i=1}^r \lambda_i \frac{A_i}{x_i}+ \sum_{i=r+1}^n \lambda_i A_i)
$$
defines a hypersurface in $\mathbb C^n$ which is log canonical at the origin. 
\end{proposition}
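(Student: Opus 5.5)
The plan is to reduce to a principal submatrix and then use Lemma \ref{lem_special}. Since $A$ is non-nilpotent, we choose a principal submatrix $A(I)$ that is non-nilpotent and minimal with this property, so that every proper principal submatrix of $A(I)$ is nilpotent. This is possible because $1\times 1$ submatrices are nilpotent iff the entry vanishes. We set $\lambda_i=0$ for $i\notin I$. Write $I=I_1\sqcup I_2$ with $I_1=I\cap\{1,\dots,r\}$ and $I_2=I\cap\{r+1,\dots,n\}$. Then
\[
f=(x_1\cdots x_r)\Big(\sum_{i\in I_1}\lambda_i \tfrac{A_i}{x_i}+\sum_{i\in I_2}\lambda_i A_i\Big).
\]
The coordinate hyperplanes $x_i=0$ for $i\le r$ appear with coefficient one, and they form a simple normal crossing divisor. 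It therefore suffices to control the remaining factor $g:=\sum_{i\in I_1}\lambda_i A_i/x_i+\sum_{i\in I_2}\lambda_iA_i$; we need to show that $\{g=0\}$ together with the hyperplanes is log canonical at $0$. We compute the lowest-degree part of $f$ and apply Lemma \ref{lem_def_normal_cone}(2). This reduction works because $f$ is already homogeneous of degree $r$ ($A_i$ is linear), so log canonicity of $\{f=0\}$ at $0$ is a statement about a homogeneous polynomial.

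By Lemma \ref{lem_special}, after permuting, $A(I)$ is a cyclic matrix: for $I=\{i_1,\dots,i_k\}$ we have $a_{i_s i_{s+1}}\neq 0$, $a_{i_k i_1}\ne 0$, and all other entries of $A(I)$ vanish. The terms of $A_i$ with $j\notin I$ still appear, so we would choose coordinates adapted to the cycle. We first treat the case $k=1$. Then $a_{ii}\ne0$ for some $i$, and we take only $\lambda_i\ne0$. If $i>r$, then $f=\lambda_i x_1\cdots x_r A_i$, and $A_i$ has a nonzero $x_i$-coefficient, so $A_i$ is a linear form independent of $x_1,\dots,x_r$; the hypersurface is then snc. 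If $i\le r$, then $f=\lambda_i(\prod_{j\ne i}x_j)A_i$, and $A_i$ contains $x_i$ with a nonzero coefficient, so again $A_i,x_j$ ($j\le r$, $j\ne i$) are independent linear forms, giving snc.

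For $k\ge2$ the strategy is to show that, for general $\lambda_i$ ($i\in I$), the divisor $\{f=0\}$ is log canonical. The model is the $n=2$ computation from the introduction: with the cycle $1\to2\to1$ and $r=2$, we get $\lambda_1 a_{12}y^2+\lambda_2a_{21}x^2+\dots$, which is a nodal conic. We would use the weighted deformation of Lemma \ref{lem_def_normal_cone}(1), with weights chosen so that the surviving initial form involves only the cyclic entries. The terms $a_{ij}x_j$ with $j\notin I$ are handled by giving the variables $x_j$, $j\notin I$, large weight (or, alternatively, by a linear change of coordinates replacing $A_i$ by new variables when these forms are independent). For $i\in I_2$ the form $A_i$ restricted to the $I$-variables is $a_{i,\sigma(i)}x_{\sigma(i)}$. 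For $i\in I_1$, the part $A_i/x_i$ becomes $a_{i\sigma(i)}x_{\sigma(i)}/x_i$, since $a_{ii}=0$ for $k\ge2$. Clearing denominators, the initial form is $\sum_{i}\lambda_i a_{i\sigma(i)} x_{\sigma(i)}\prod_{j\in I_1\setminus\{i\}}x_j$ times the remaining coordinate factors. This is a "cyclic binomial-type" polynomial, and we must show it defines a log canonical divisor together with the hyperplanes. One could instead try to find the $\lambda$'s that make it toric-like, for instance choosing only two nonzero $\lambda$'s so that $f$ becomes a binomial whose log canonicity follows from toric computations via its Newton polygon.

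The main obstacle is this last step: establishing log canonicity of the initial polynomial when $k\ge2$ and $I_1,I_2$ are both nonempty, where the hyperplane factors can interact badly with $g$. The first thing to try is to choose $\lambda$ so that the relevant polynomial, in suitable coordinates, is a sum of monomials whose exponent vectors are affinely independent, i.e. nondegenerate with respect to its Newton polyhedron. One would then apply the standard criterion that a nondegenerate divisor is log canonical when $(1,\dots,1)$ lies in the Newton polyhedron (after accounting for the coefficient-one hyperplanes), and verify this for the cyclic structure.
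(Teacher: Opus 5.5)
Your reduction to a minimal non-nilpotent principal submatrix $A(I)$ (with $\lambda_i=0$ for $i\notin I$), your use of Lemma~\ref{lem_special} to make $A(I)$ cyclic, and your treatment of $k=1$ are all fine. Removing the entries $a_{ij}$, $j\notin I$, by a weighted degeneration is also a workable substitute for the paper's restriction to $L=\{x_j=0,\ j\notin I\}$ followed by inversion of adjunction. But the proposal stops exactly where the content of the proposition lies. For $k\ge 2$ you never prove that the cyclic polynomial
\[
h=\sum_{i\in I}\lambda_i a_{i\sigma(i)}\,x_{\sigma(i)}\prod_{j\in I_1\setminus\{i\}}x_j \qquad (\text{on } \mathbb C^I)
\]
is log canonical at $0$; you only list strategies. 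The missing argument splits into three cases.
\begin{itemize}
\item If $\emptyset\neq I_1\subsetneq I$, the cycle must leave $I_1$, so some $i\in I_1$ has $\sigma(i)\in I_2$. Every term of $h$ except the $i$-th is divisible by $x_i$. Hence $h|_{\{x_i=0\}}=\lambda_i a_{i\sigma(i)}x_{\sigma(i)}\prod_{j\in I_1\setminus\{i\}}x_j$ is a product of distinct coordinates, and inversion of adjunction along $\{x_i=0\}$ gives log canonicity as soon as $\lambda_i\neq 0$.
\item If $I_1=I$, the monomials are $x_{\sigma(i)}^2\prod_{j\neq i,\sigma(i)}x_j$ and their exponent vectors average to $(1,\dots,1)$. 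By Howald's theorem the monomial ideal they generate has $\mathrm{lct}_0=1$, and a general linear combination (all $\lambda_i$ general) has the same log canonical threshold.
\item If $I_1=\emptyset$, $h$ is a nonzero linear form.
\end{itemize}
This is precisely the paper's Case 1, and it is the heart of the proof.

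Your fallback of keeping only two nonzero $\lambda$'s does not work. When $I_1=I$ and $k\ge 3$, the point $(1,\dots,1)$ lies in the Newton polyhedron of no two of these monomials, since that would force $\sigma^2(i)=i$. So every such binomial has $\mathrm{lct}_0<1$, and all $k$ terms are genuinely needed.

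Several preliminary claims are also incorrect:
\begin{itemize}
\item $f$ is not homogeneous of degree $r$, since the terms with $i>r$ have degree $r+1$.
\item For $i\in I_1$ the hyperplane $\{x_i=0\}$ is not a component of $\{f=0\}$, because the term $\lambda_iA_i\prod_{j\le r,\,j\ne i}x_j$ is not divisible by $x_i$.
\item Asking that $\{g=0\}$ be log canonical together with all hyperplanes $\{x_i=0\}$, $i\le r$, is too strong. Already for $n=r=2$ it would require a node plus both coordinate axes, i.e.\ four concurrent lines, to be log canonical, which is false.
\end{itemize}
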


\begin{proof}
If $r=0$, then 
$$
f(x)=\sum \lambda_i A_i
$$
is linear and non-zero for a general choice of $\lambda_i$, so $\{f=0\}$ is actually smooth at the origin. Hence we can assume $r \ge 1$ from now on. 

We argue by induction on $n \ge 1$. If $n=1$, then
$$
f(x)= \lambda_1 A_1= \lambda_1 a_{11}x_1,
$$
and we can simply take $\lambda_1 \ne 0$. Hence we can assume $n\ge2$ and that the statement holds for any $k <n$.

\medskip

\textbf{Case 1: every proper principal submatrix of $A$ is nilpotent.} Then $A$ is of the form given by Lemma \ref{lem_special}. Note that the order might be different.

If $1 \le r<n$, then the principal top-left submatrix of order $r$ of $A$ must have a zero row, otherwise $G(A)$ would have a cycle of length  $r < n$, which contradicts Lemma \ref{lem_special}. Hence there is $i\le r$ and $\ell > r$ such that $A_i=a_{i,\ell}x_{\ell}$ and taking $x_i=0$, we get
$$
f(x_1, \ldots, x_i=0, \ldots, x_n)= \lambda_i a_{i,\ell} x_\ell x_1 \cdots \hat x_i \cdots x_r,
$$
which gives a log canonical singularity. By inversion of adjunction, $f(x)=0$ is also log canonical for $\lambda_i \ne 0$.

We now deal with the case $r=n$, in which a similar trick does not work. 
Then (up to reordering $\{1,\ldots, n\}$)
\[
f(x)
=
\lambda_1 a_{12}x_2^2x_3 x_4 \cdots x_n
+
\lambda_2 a_{23} x_1x_3^2x_4\cdots x_n
+
\cdots
+
\lambda_n a_{n1} x_1^2x_2x_3 \cdots x_{n-1},
\]

Let $\mathfrak a$ be the monomial ideal generated by the monomials appearing in $f$, i.e.
\[
\mathfrak a
=
(x_1 \cdots x_{i-2} x_i^2 x_{i+1}\cdots x_n)_{i=1, \ldots,n}.
\]

The Newton polyhedron $P(\mathfrak a)$ is the convex hull of the exponent vectors
\[
(2,0,1,\ldots,1), (1,2,0,1,\ldots,1) \ldots, (0,1,\ldots,1,2),
\]
and so
\[
(1,\ldots,1) = \frac{1}{n}(2,0,1,\ldots,1) +\ldots+\frac{1}{n} (0,1,\ldots,1,2)\in P(\mathfrak a).
\]
By Howald's theorem on multiplier ideals of monomial ideals \cite[Main Theorem]{MR1828466},
\[
\mathrm{lct}_0(\mathfrak a)=1.
\]

If we take $f$ to be a general linear combination of the monomial generators of $\mathfrak a$,
the log canonical threshold of $f$ at the origin equals $\mathrm{lct}_0(\mathfrak a)$, see \cite[Example 1.10]{Mustata}.
Therefore
\[
\mathrm{lct}_0(f)
=
\mathrm{lct}_0(\mathfrak a)
=
1.
\]

Hence the hypersurface $\{f=0\}$ is log canonical at the origin.

\medskip

\textbf{Case 2: $A$ has a principal submatrix $A'$ of order $1\le k<n$ that is not nilpotent.}

Let $I=\{i_1,\dots,i_k\}\subset \{1,\dots,n\}$ be the index set of this
principal submatrix, so that $A'=(a_{ij})_{i,j\in I}$.
Set $\lambda_i=0$ for $i\notin I$ and
\[
K=\{1,\dots,r\}\setminus I .
\]
From the definition of $f$ it follows that
\[
f=\Big(\prod_{j\in K} x_j\Big)\, g ,
\]
where
\[
g=\sum_{i\in I}\lambda_i A_i
\prod_{\substack{1\le j\le r\\ j\ne i,\, j\notin K}} x_j .
\]

Consider the coordinate subspace
\[
L=\{x_j=0 \mid j\notin I\}\subset \mathbb C^n .
\]
On $L$ the linear forms $A_i$ with $i\in I$ restrict to
\[
A_i|_L=\sum_{j\in I} a_{ij}x_j ,
\]
which are precisely the linear forms associated with the matrix $A'$.
Hence $g|_L$ is the polynomial obtained from $A'$ by the same
construction (with the variables $\{x_i\}_{i\in I}$).

Since $A'$ is not nilpotent, the inductive hypothesis provides
$\lambda_{i_1},\dots,\lambda_{i_k}$, not all zero, such that
\[
\{g|_L=0\}\subset L\simeq \mathbb C^k
\]
is log canonical at the origin.

Finally, since
\[
f=\Big(\prod_{j\in K} x_j\Big) g,
\]
repeated applications of inversion of adjunction along the coordinate
hyperplanes $\{x_j=0\}$ for $j\in K$ show that $\{f=0\}\subset\mathbb C^n$
is log canonical at the origin.
\end{proof}

\section{Pairs associated to foliations}

The basic idea of this section is to study the tangency locus between a foliation on $X$ with log canonical singularities and general vector fields on $X$.  The main result of this paper, 
Theorem \ref{thm:main_intro}, shows that under suitable hypotheses we can control the singularities of
 the tangency locus.

\begin{definition}
Let $X$ be a normal variety of dimension $n$, let $\mathcal F$ be a foliation of rank $r$ on $X$
and let $\mathcal G$ be a distribution of rank $n-r$ so that the natural map \[\rho\colon T_{\mathcal F} \oplus T_{\mathcal G} \to T_X\] is generically surjective.  For any prime divisor $D \subset X$ we define ${\rm tang}(D, \mathcal F, \mathcal G)$ to be the length of ${\rm coker}~ \rho$ at the generic point of $D$.
We define the tangency divisor of $\mathcal F$ and $\mathcal G$ to be \[{\rm tang}(\mathcal F, G) = \sum_{D \subset X}{\rm tang}(D, \mathcal F, \mathcal G) ~D.\]
\end{definition}

If $\mathcal G$ is generated by vector fields $v_1, \ldots, v_{n-r} \in T_X$ on $X$, we will denote ${\rm tang}(\mathcal F, \mathcal G)$ by 
${\rm tang}(\mathcal F, v_1,\ldots, v_{n-r})$.

\begin{remark}
Let $X$ be a normal variety of dimension $=n$, let $\mathcal F$ be a foliation on $X$ and let $v_1, \dots, v_{n-1} \in T_X$.
\begin{itemize}
\item If $\mathcal F$ is generated by a vector field $v$ and $v_1, \dots, v_{n-1}$ generate the annihilator of a $1$-form $\omega$ without codimension one zeros, then 
$$
{\rm tang}(\mathcal F, v_1, \dots, v_{n-1}) = {\rm div} (\omega(v)).
$$

    \item In the case that $\mathcal F$ is defined by a 1-form $\omega$, then ${\rm tang}(\mathcal F, v_1) = \{\omega(v_1) = 0\}$.
    \item In the case that $\mathcal F$ is defined by a vector field $w$ then 
    \[{\rm tang}(\mathcal F, v_1, \ldots, v_{n-1}) = \{w \wedge v_1 \wedge \cdots \wedge v_{n-1} = 0\}\]
    where $w \wedge v_1 \wedge \cdots \wedge v_{n-1}$
    is considered as a section of $\bigwedge^nT_X$.
\end{itemize}
\end{remark}

\begin{lemma}\label{lem:normal}
Let $X$ be a normal variety of dimension $n$ and let $\mathcal F$ be a foliation of rank $r$ on $X$. Set $N:=T_X/T_\cF$. Let $v_1,\ldots, v_{n-r}$ be vector fields on $X$ such that $v_1 \wedge \cdots \wedge v_{n-r}$ gives a non-zero element in $H^0(X,\det N)$. Then 
$$
K_\cF \sim K_X+\Gamma,
$$
where $\Gamma = {\rm tang}(\mathcal F, v_1,\ldots, v_{n-r})$ and $\cO_X(\Gamma) \sim \det N$.
\end{lemma}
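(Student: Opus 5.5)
Everything can be checked on the smooth locus $X^\circ$ of $X$ and, after removing the singular locus of the saturated sheaf $T_\cF$ and the non-locally-free locus of $N$, on an open set $U\subset X$ whose complement has codimension at least two. Linear equivalence of Weil divisors on a normal variety is determined on such an open set, so it suffices to prove $\cO_U(\Gamma)\cong \det N|_U$ and $\det N|_U\cong \cO_U(K_\cF-K_X)$. The plan is to obtain both isomorphisms from the short exact sequence
\[
0\to T_\cF\to T_X\to N\to 0
\]
together with an identification of the vanishing divisor of $v_1\wedge\cdots\wedge v_{n-r}$ with the tangency divisor.

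\textbf{Step 1: determinant identity.} On $U$ all three sheaves in the sequence are locally free, of ranks $r$, $n$ and $n-r$. Taking determinants gives
\[
\det T_X\cong \det T_\cF\otimes \det N,
\]
that is, $\cO_U(-K_X)\cong\cO_U(-K_\cF)\otimes\det N$. Hence $\det N\cong \cO_U(K_\cF-K_X)$. Since $\det N$ is reflexive of rank one on $X$, this extends to an isomorphism of reflexive sheaves on all of $X$.

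\textbf{Step 2: $\Gamma$ is the zero divisor of the section.} The images of the $v_i$ in $N$ give a section $s=\bar v_1\wedge\cdots\wedge\bar v_{n-r}$ of $\det N$, which is nonzero by hypothesis. Its zero divisor $Z(s)$ is effective and satisfies $\cO_X(Z(s))\cong\det N$.

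It remains to check $Z(s)=\Gamma$ coefficient by coefficient. Localise at the generic point of a prime divisor $P$. There $\cO_{X,P}$ is a DVR and, since $N$ is torsion-free, $N_P$ is free of rank $n-r$. The cokernel of $\rho\colon T_\cF\oplus T_\mathcal G\to T_X$ is isomorphic to the cokernel of the induced map
\[
\cO_{X,P}^{\,n-r}\to N_P,\qquad e_i\mapsto \bar v_i .
\]
(Here $\mathcal G$ is the distribution generated by the $v_i$; replacing it by the free module spanned by the $v_i$ does not change the image of $\rho$.) For a map between free modules of the same rank over a DVR, the length of the cokernel equals the valuation of the determinant. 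Thus ${\rm tang}(P,\cF,\mathcal G)=\mathrm{ord}_P(s)$, and therefore $\Gamma=Z(s)$.

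\textbf{Conclusion.} Combining the two steps gives $\cO_X(\Gamma)\cong\det N\cong\cO_X(K_\cF-K_X)$, so $K_\cF\sim K_X+\Gamma$.

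The only delicate point is the codimension-two reduction: one must make sure that $N$ is locally free off a codimension-two set, which holds because $N$ is torsion-free ($T_\cF$ being saturated) on a normal variety. With that in place, the main step is the DVR computation in Step 2, where the length of the cokernel is identified with the order of vanishing of the determinant.
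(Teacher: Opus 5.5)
Your proposal is correct and follows the same route as the paper. Both arguments restrict to a big open set where $X$ and $\mathcal F$ are smooth, identify $\Gamma$ with the zero divisor of $v_1\wedge\cdots\wedge v_{n-r}$ viewed as a section of $\det N$, and use $\det T_X\cong \det T_{\mathcal F}\otimes \det N$; the only difference is that you justify the identification of the tangency divisor with this zero divisor (via length of the cokernel $=$ valuation of the determinant over the DVR $\mathcal O_{X,P}$), whereas the paper simply asserts it.
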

\begin{proof}
Recall that $N$ is a torsion free sheaf and $\det N$ is defined as the double dual of $\wedge^{n-r} N$, in particular it is a reflexive sheaf. The same is true for $K_\cF$ and $K_X$, hence it is enough to prove the equation on an open subset $U$ of $X$ such that $\codim(X \setminus U) \ge 2$.  In particular we may assume that 
$U$ is smooth and $\cF$ is smooth. 
The statement follows now noting that the tangency locus is given by the vanishing locus of the section $\sigma=v_1 \wedge \cdots \wedge v_{n-r}$ of $\det N$
and the fact that $\mathcal O(K_{X}) \cong \mathcal O(K_{\mathcal F}) \otimes (\det N)^*$.
\end{proof}

Before turning to the proof of Theorem \ref{thm:main_intro} we state and prove some useful results.

\begin{lemma}
\label{lem_A}
Let $X$ be a quasi-projective klt variety and $B$ a reduced divisor such that $(X, B)$ is log canonical. Suppose that $T_X(-\log B)$ is locally free. Then $(X,B)$ is toroidal, in particular, if  $x \in X$ is a closed point, then 
  \begin{enumerate}
      \item there exists a Euclidean neighborhood $U$ of $x$ and a small modification $\mu\colon V \to U$ such that $V$ has finite quotient singularities; 
      \item for any point $y \in V$, there exists an open neighbourhood $W$ of $y$ and a finite quasi-\'etale morphism $q\colon W' \to W$ such that $(W', B':=q^{-1}\mu^{-1}B)$
      is an snc pair;
       and 
      \item we have natural isomorphisms 
      \[q^*\mu^*T_U(-\log B) \cong T_{W'}(-\log B')\]
      and 
      \[q^*\mu^*\Omega^{1}_U(\log B) \cong \Omega^{1}_{W'}(\log B').\]
      
  \end{enumerate}
\end{lemma}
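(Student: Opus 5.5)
The plan is to reduce the statement to the structure theorem for log smooth / toroidal pairs characterised by local freeness of the logarithmic tangent sheaf. First, the fact that $(X,B)$ is toroidal should follow from \cite[Theorem 6.8]{Bergner} (as recalled after Theorem \ref{thm:main_intro}). That result asserts that a log canonical pair with locally free $T_X(-\log B)$ is, locally analytically at each closed point, isomorphic to a toric pair $(U_\sigma, \partial U_\sigma)$. To see why it holds, one can take a log resolution or pass to the index one cover. Near a point $x$, the local freeness of $T_X(-\log B)$ provides $n$ commuting-up-to-lower-order logarithmic vector fields. Integrating them gives a local action of an $n$-dimensional abelian Lie algebra with an open orbit $X\setminus B$ near $x$, and an analytic toric chart.

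Granting this toric model, the three items are obtained from toric geometry of an affine toric variety $U_\sigma$ containing $x$ as the torus-fixed point (after shrinking to a Euclidean neighbourhood $U$). For (1), since $X$ is klt, hence $\Q$-factoriality is not assumed but $K_X$ is $\Q$-Cartier, I take a simplicial subdivision $\Sigma$ of $\sigma$ that introduces no new rays. This is always possible: pull the rays in some order. It gives a small toric modification $\mu\colon V=X_\Sigma\to U_\sigma$. Then $V$ is simplicial toric, hence has only finite abelian quotient singularities. For (2), around any $y\in V$ the chart is a simplicial affine toric variety $U_\tau = \C^k/G\times(\C^*)^{n-k}$. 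The standard quotient $q\colon W'=\C^k\times(\C^*)^{n-k}\to W$ by the finite group $G=N/N_\tau'$, where $N_\tau'$ is the sublattice generated by the rays, is quasi-étale: $G$ contains no pseudoreflections because the rays are primitive. The preimage of the toric boundary $\mu^{-1}B$ (which is the toric boundary, since $\mu$ is small and toric) is the coordinate hyperplane arrangement, hence snc.

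For (3), I will use that on any toric variety $T_{X}(-\log \partial X)\cong \cO_X\otimes N_\C$ and $\Omega^1_X(\log\partial X)\cong \cO_X\otimes M_\C$ canonically, generated by the invariant vector fields and forms. These trivialisations are compatible with toric morphisms that induce isomorphisms $N'_\C\cong N_\C$. Both $\mu$ (same lattice) and $q$ (finite-index sublattice) are such morphisms, so pullback identifies the trivialisations. This gives the natural isomorphisms. Equivalently, both sides are reflexive and agree away from codimension two, since $\mu$ is an isomorphism in codimension one and $q$ is étale in codimension one.

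The main obstacle is the first step, extracting an honest analytic toric chart from local freeness of $T_X(-\log B)$ plus log canonicity. I would rely on \cite{Bergner} for this rather than reprove it. The compatibility of the toric chart with $B$, namely that $B$ equals the full toric boundary, is part of that statement and is needed for (2) and (3).
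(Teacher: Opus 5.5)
Your proposal is correct and follows essentially the same route as the paper. Both rely on Bergner's theorem for toroidality and a simplicial refinement without new rays for (1). For (2) you use the standard quasi-\'etale toric quotient cover; the paper describes it as the composite of the index-one covers of the boundary components, which yields the same cover. For (3) the paper's argument is that both sides are locally free and agree away from codimension two, which you mention and supplement with the canonical trivialisation $T(-\log \partial)\cong \cO\otimes N_\C$. One small slip: when $\tau$ is not full-dimensional the group should be $G=N_\tau/N'_\tau$ with $N_\tau=N\cap \mathrm{span}(\tau)$, not $N/N'_\tau$, which is infinite.
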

\begin{proof}
The fact that $(X,B)$ is toroidal is \cite[Theorem 6.8]{Bergner}.  So for any $x \in X$, we can take a Euclidean neighborhood $U$ of $x$ such that $(U, D|_U)$ is isomorphic to a Euclidean open subset of a toric variety together with its torus boundary.  Up to replacing $U$ by this toric variety, we may freely assume that $U$ is a toric variety.

(1) and (2) follow from standard facts in toric geometry (and in fact the morphisms $\mu$ and $q$ may be taken to be morphisms of toric varieties).  Indeed, (1) follows by observing that any fan can be refined into a simplicial fan without adding any one dimensional rays.  (2) follows by taking $q$ to be the composition of the index one covers associated to the irreducible components of the torus boundary.

Item (3) follows by first observing $\mu^*T_{U}(-\log B) = T_V(-\log \mu^{-1}B)$.  Indeed, both of
these sheaves are locally free and are isomorphic away from ${\rm Exc}~\mu$. Since ${\rm Exc}~\mu$ is codimension $\ge 2$ we get our claimed isomorphism.  Similarly, $q$ is \'etale away from a subset of codimension $\ge 2$ and so it follows that $q^*T_W(-\log \mu^{-1}B) \cong T_{W'}(-\log B')$.  Finally, since $(T_U(-\log B))^* = \Omega^{1}(\log B)$
and $(T_{W'}(-\log B'))^* = \Omega^{1}_{W'}(\log B')$ the isomorphism 
\[q^*\mu^*T_U(-\log B) \cong T_{W'}(-\log B')\]
      immediately implies the isomorphism 
      \[q^*\mu^*\Omega^{1}_U(\log B) \cong \Omega^{1}_{W'}(\log B')\]
      as required.  
\end{proof}

\begin{proposition} \label{prop_C}
    Let $(x \in X, B)$ be a germ of a snc pair, let $\mathcal F$ be a foliation of rank 1 on $X$ and $\D$ a reduced divisor such that $(\mathcal F, \D)$ has log canonical singularities.

    Let $v_1, \ldots, v_n$ be generators of $T_X(-\log B)$. Then there is a choice of 
    $$
    {\bf b}_1=[b_{11}:b_{21}: \cdots : b_{n1}] , \ldots, {\bf b}_{n-1}=[b_{1,n-1} : \cdots : b_{n,n-1}] \in \mathbb P^{n-1}_{\mathbb C},
    $$
    such that the pair
    $$
    (X, \D+ {\rm tang}(\mathcal F,  \{ b_{11}v_1 + \ldots + b_{n1}v_{n}, \ldots, b_{1,n-1}v_1 + \ldots + b_{n,n-1}v_{n}\})
    $$
    is log canonical.
\end{proposition}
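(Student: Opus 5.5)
We work in snc coordinates $x_1,\dots,x_n$ at $x$ with $B=\{x_1\cdots x_r=0\}$, and take $v_i=x_i\partial_{x_i}$ for $i\le r$ and $v_i=\partial_{x_i}$ for $i>r$. Since $X$ is smooth, $\mathcal F$ is generated near $x$ by a vector field $\partial=\sum_i a_i\partial_{x_i}$ with no codimension one zeros.

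For general ${\bf b}_1,\dots,{\bf b}_{n-1}$, the vector fields $w_j=\sum_i b_{ij}v_i$ satisfy
$$w_1\wedge\cdots\wedge w_{n-1}=\sum_i \mu_i\, v_1\wedge\cdots\widehat{v_i}\cdots\wedge v_n,$$
where the $\mu_i$ are, up to sign, the maximal minors of the matrix $(b_{ij})$. As the ${\bf b}_j$ vary, $(\mu_1,\dots,\mu_n)$ ranges over a dense open subset of $\mathbb C^n$. Wedging with $\partial$ gives
$$\partial\wedge w_1\wedge\cdots\wedge w_{n-1}=\pm\Big(\sum_i \lambda_i\, a_i\, x_1\cdots\widehat{x_i}\cdots x_r\Big)\,\partial_{x_1}\wedge\cdots\wedge\partial_{x_n},$$
with $\lambda_i=\pm\mu_i$. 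Hence $\Gamma:={\rm tang}$ is the divisor of $f=\sum_i\lambda_i a_i\, x_1\cdots\widehat{x_i}\cdots x_r$ (the hat means the factor is omitted only when $i\le r$). This is exactly the shape of the polynomial in Proposition \ref{prop_det}, except that the $a_i$ are arbitrary functions rather than linear forms.

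\textbf{Case $x\notin \mathrm{Sing}\,\mathcal F$.} Some $a_{i_0}(x)\neq 0$.
\begin{itemize}
\item If $i_0>r$, or if some $a_{i}(x)\ne0$ with $i>r$, choose $\lambda$ to be generic. Then $f$ equals a unit times $\prod_{j\le r}x_j$ plus terms each missing one factor $x_j$; I expect a direct check along the strata of $B$, combined with inversion of adjunction, to show that $\{f=0\}$ is at worst $\{\text{unit}=0\}\cup$ snc.
\item More simply, a Bertini-type argument should apply. The linear system $\{\partial\wedge w_1\wedge\cdots\wedge w_{n-1}\}$ is spanned by the sections $\partial\wedge v_1\wedge\cdots\widehat{v_i}\cdots\wedge v_n$. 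On each stratum of $B$ it restricts to a system whose base locus should be controlled by the tangency of $\partial$ with the stratum, so a general member is snc with $B$ away from $\mathrm{Sing}\,\mathcal F$.
\end{itemize}
I would also handle $\Delta$ here. Since $\mathcal F$ is smooth at $x$ and $(\mathcal F,\Delta)$ is log canonical, each component of $\Delta$ is $\mathcal F$-invariant and snc, with transversality constraints. The claim to prove is that a general tangency divisor meets $\Delta$ snc, using genericity of $\lambda$.

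\textbf{Case $x\in\mathrm{Sing}\,\mathcal F$.} By Lemma \ref{lem_support}, $x\notin\Supp\Delta$, so it suffices to show that $(X,\Gamma)$ is log canonical at $x$. Invariance of $B$ (the boundary components should be $\mathcal F$-invariant, or else treated separately) gives $x_i\mid a_i$ for $i\le r$. By Proposition \ref{prop_singularities}(3), the linear part $A=(a_{ij})$ of $\partial$ is non-nilpotent, and its lowest order terms are $A_i=\sum_j a_{ij}x_j$. The lowest degree homogeneous part of $f$ is then $\sum_i\lambda_i A_i\,x_1\cdots\widehat{x_i}\cdots x_r$, provided this is nonzero for the chosen $\lambda$. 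Proposition \ref{prop_det} supplies $\lambda$ making it log canonical at the origin. Log canonicity is an open condition in $\lambda$ (lower semicontinuity of the lct in families), so a general $\lambda$ works. Lemma \ref{lem_def_normal_cone}(2) then transfers log canonicity from the initial form to $f$ itself near $x$.

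\textbf{Globalising.} Openness of log canonicity lets us choose a single general $\lambda$ that works on a neighbourhood of $x$, which is all that is needed for a germ statement.

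\textbf{Main obstacle.} I expect the main difficulty to be the nonsingular case with boundary and $\Delta$: controlling how the tangency divisor meets the strata of $B$ and the components of $\Delta$. A further point to check is that the boundary components are $\mathcal F$-invariant, so that $x_i\mid a_i$ holds and the lowest degree part really has the shape required in Proposition \ref{prop_det}.
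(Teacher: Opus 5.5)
The real gap is the case $x\notin\mathrm{Sing}\,\mathcal F$. You leave it at ``I expect'' and ``should apply'', and the one structural fact you state there is backwards.

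If $\mathcal F$ is smooth at $x$ and $(\mathcal F,\Delta)$ is log canonical, then $\Delta$ has \emph{no} $\mathcal F$-invariant components: an invariant divisor $D$ has $\varepsilon(D)=0$, so log canonicity forces $\mathrm{mult}_D\Delta\le 0$. More precisely, writing $\Delta=\{g=0\}$ near $x$, $\partial(g)$ is a unit \cite[Fact III.i.1]{MP13}. So $\Delta$ is smooth at $x$ and transverse to the leaf.

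This is exactly what the statement needs. For every $\lambda$, your $f$ is divisible by $x_j$ whenever $x_j\mid a_j$, so $\Gamma$ always contains the $\mathcal F$-invariant components of $B$. If $\Delta$ could contain one of them, as your claim allows, $\Delta+\Gamma$ would have a component of multiplicity $2$ and would not be log canonical.

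With transversality in hand, the paper needs no Bertini-type analysis. It takes $s$ with $a_s(x)\neq 0$, normalises $a_s=1$, and uses the vector fields $v_i$, $i\ne s$ (your $\lambda=e_s$). The tangency equation is then $\prod_{i\le r,\,i\ne s}x_i$, so $\Gamma$ is a sum of components of $B$ and $\Delta+\Gamma$ is snc at $x$. For this last step, $s$ should also satisfy $(\partial g/\partial x_s)(x)\neq0$; the paper leaves this implicit. Such an $s$ exists because $dg(\partial)(x)\neq0$, and then $dg(x)$ is independent of the $dx_i(x)$, $i\le r$, $i\ne s$. If you want a general choice rather than this special one, the openness of log canonicity in the linear system, which you already invoke, gives it.

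Your singular case follows the paper's route: Lemma~\ref{lem_support} to discard $\Delta$, the non-nilpotent linear part, Proposition~\ref{prop_det}, then Lemma~\ref{lem_def_normal_cone}(2). Your Pl\"ucker computation of $\Gamma$ agrees with the paper's $\mathrm{div}(x_1\cdots x_r\,\omega(\partial))$.

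Drop the assumption that $B$ is $\mathcal F$-invariant. It is not a hypothesis: in Theorem~\ref{thm:main_intro}, $\mathcal F$ is arbitrary with respect to $B$. It is also not needed, because Proposition~\ref{prop_det} allows any non-nilpotent $A$. The expression $(x_1\cdots x_r)\sum_{i\le r}\lambda_iA_i/x_i$ is a polynomial whether or not $x_i\mid A_i$, and that form is there precisely to handle non-invariant components.

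One caution, which applies equally to the paper's Case 1. For $0<r<n$ the polynomial $\sum_i\lambda_iA_i\,x_1\cdots\widehat{x_i}\cdots x_r$ mixes degrees $r$ and $r+1$. So it is not literally the initial form of $f$, and Lemma~\ref{lem_def_normal_cone}(2) does not apply as stated. The restriction and inversion-of-adjunction steps in the proof of Proposition~\ref{prop_det} should be run on $f$ itself.
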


\begin{proof}
We can assume that $(X,x)$ is $(\mathbb C^n, 0)$ with coordinates $x_1, \ldots, x_n$.  
Up to reordering, we can assume that $B=\{x_1\cdots x_r=0\}$, and $T_X(-\log B)$ is generated by $v_1 := x_1 \partial_1, \ldots , v_{r}:= x_r \partial_r, v_{r+1}:= \partial_{r+1}, \ldots v_n \partial_n$.
Suppose that $\mathcal F$ is generated by a vector field $\partial$.

\medskip

Consider the logarithmic 1-form
\[
\omega = \sum_{i=1}^r \lambda_i \frac{dx_i}{x_i}+\sum_{i = r+1}^n \lambda_i dx_i,
\] 
where $\lambda_i \in \mathbb C$ are taken to be general. 

We define \[v_i =
\begin{cases}
    \lambda_ix_1\partial_1 - \lambda_1x_i\partial_i \qquad &\text{for } 2 \leq i \leq r \\
    \lambda_ix_1\partial_1 - \lambda_1\partial_i \qquad &\text{for } r+1 \leq i \leq n.
\end{cases}\]
Note that $v_2, \dots, v_n$ generate the annihilator of $\omega$
and so ${\rm tang}(\mathcal F, v_2, \dots, v_n)$
is
\[
\Gamma = \mathrm{div}(x_1\cdots x_r \,\omega(\partial)).
\]

To show that $(X,\D+\Gamma)$ is log canonical in a neighbourhood of $x$ we argue in cases based on whether $\partial$ is singular at $x$ or not.

{\bf Case 1: $\partial$ is singular at $x$.}  By Lemma \ref{lem_support}, we know that $0$ is not contained in the support of $\D$.
Hence we may simply assume that $\D=0$.
Write 
$$
\partial= \partial_0 + \partial'= \sum_{i,j}a_{ij}x_i\partial_j + \partial', 
$$
where $a_{ij} \in \mathbb C$ and $\partial'$ vanishes with order at least 2 at $0$. Let $A = (a_{ij})_{1\leq i, j\leq n}$.
%and, for $i=1,\ldots, r$, $w_i=\sum_{j} b_{ji}x_j\partial_j$, while for $i=r+1, \ldots, n-1$, $w_i=\sum_{j} b_{ji}$. 

By Lemma \ref{lem_def_normal_cone}(2), it is enough to check that
$$
(\C^n, \Gamma_0:=\mathrm{div}(x_1\cdots x_r \,  \omega(\partial_0))) = (\C^n, \mathrm{div}(x_1\cdots x_r \,  \omega(\sum_{i,j}a_{ij}x_i\partial_j))
$$
is log canonical at the origin. Note that $\Gamma_0$ is the defined by the vanishing of
$$
f(x_1,\dots,x_n) :=  (x_1 \cdots x_r)(\sum_{i=1}^r \lambda_i \frac{A_i}{x_i}+ \sum_{i=r+1}^n \lambda_i A_i)
$$
where $A_i := \sum _{j=1}^n a_{ij}x_j$. Since $\cF$ is log canonical, $A$ is not nilpotent and the conclusion follows by Proposition \ref{prop_det}.   

\medskip

{\bf Case 2: $\partial$ is non-singular at $x$.}
Write $\partial = \sum a_i \partial_i$.  By assumption for some $s \in \{1, \dots, n\}$ we have $a_s(0) \neq 0$ and so up to rescaling $\partial$ by a unit we may assume that $a_s = 1$.  
Note, moreover, that if $\Delta = \{f = 0\} \neq 0$, then $
\partial(f)$ is a unit, cf. \cite[Fact III.i.1]{MP13}, so without loss of generality we may assume that $f = x_s+ f_0(x_1, \dots, x_n)$.

A direct calculation  shows that 
\[{\rm tang}(\mathcal F,  \{v_1, \ldots,  \widehat{v_s},\ldots,  v_n\} = \{\prod_{1 \leq i \leq r,  i \neq s} x_i + g_0 = 0\}\]
where ${\rm ord}_0g_0 > {\rm ord}_0\prod_{1 \leq i \leq r,  i \neq s} x_i$.
Another application of 
Lemma \ref{lem_def_normal_cone}(2) allows us to conclude that 
$$
(\mathbb C^n, \{x_s+f_0 = 0\}+\{\prod_{1 \leq i \leq r,  i \neq s} x_i + g_0 = 0\})
$$
is log canonical as required.

\end{proof}

\section{Proof of Theorem \ref{thm:main_intro}}

We are now ready to prove our main Theorem.

\begin{proof}[Proof of Theorem \ref{thm:main_intro}] 
Let $w_1, \dots, w_m$ be a basis of $H^0(X, T_X(-\log B)\otimes D)$.
For $i = 1, \dots, n-1$ define $v_i = \sum_{j = 1}^m a_{ij}w_j$
where $a_{ij} \in \mathbb C$.
We will show for a general choice of $a_{ij}$ that 
the pair $(X, \Delta+{\rm tang}(\mathcal F, v_1, \dots v_n))$ is log canonical.

Let $x \in X$ be any point. 
    By Lemma \ref{lem_A} there exists a Euclidean neighbourhood $U$ of $x$ together with a small modification $\mu\colon V \to U$ such that $V$ has only cyclic quotient singularities.  Let $y \in V$ be any point an let $W' \to W$ be the cyclic quotient where $(W', B':=q^{-1}\mu^{-1}B)$ is
    an snc pair.  
    
We can pullback $w_1, \dots, w_m$ to $V$ to give meromorphic vector fields on $\tilde{w}_1, \dots, \tilde{w}_m$ on $V$.  Since ${\rm Exc} ~ \mu$ is codimension $\ge 2$ we see that in fact $\tilde{w}_1, \dots, \tilde{w}_m$ are holomorphic vector fields on $V$.  Similarly, we can pullback $w_1, \dots, w_m$ to holomorphic vector fields $w'_1, \dots, w'_m$ on $W'$.
    Let us also write \[K_{V}+\Delta_V = \mu^*(K_U+\Delta|_U)\] and \[K_{W'}+\Delta_{W'} = q^*(K_W+\Delta_V|_W).\]

    Since $\mu$ (resp. $q$) is an isomorphism (resp. is \'etale) away from a subset of codimension $\ge 2$ we have the following equalities of divisors: 
    \begin{align}
    \label{eqn_1}
    \mu^*{\rm tang}(\mathcal F, v_1, \dots, v_{n-1}) = {\rm tang}(\mu^{-1}\mathcal F, \tilde{v}_1, \dots, \tilde{v}_{n-1})
    \end{align}
    and 
    \begin{align}
    \label{eqn_2}
    q^*{\rm tang}(\mu^{-1}\mathcal F, \tilde{v}_1, \dots, \tilde{v}_{n-1}) = {\rm tang}(q^{-1}\mu^{-1}\mathcal F, v'_1, \dots, v'_{n-1})
    \end{align}
    where $\tilde{v}_i$ (resp. $v'_i$) is the pullback of $v_i$ to $V$ (resp. $W'$).
Note that ${\rm tang}(\mathcal F, v_1, \dots, v_{n-1}) \in | \det N_{\mathcal F}|$,
${\rm tang}(\mu^{-1}\mathcal F, \tilde{v}_1, \dots, \tilde{v}_{n-1}) \in |\det N_{\mu^{-1}\mathcal F}|$
are both $\mathbb Q$-Cartier divisors, see Lemma \ref{lem:normal}.

    Again, by Lemma \ref{lem_A} we see that $w'_1, \dots, w'_m$ generate $T_{W'}(-\log B')$.
    By Proposition \ref{prop_C}, if $a_{ij}$ are general, it follows that $(W', \Delta_{W'}+{\rm tang}(q^{-1}\mu^{-1}\mathcal F, v'_1, \dots, v'_{n-1}))$
    is log canonical. By \cite[Proposition 5.20]{KM98} and (\ref{eqn_2}) we see that 
    $(W, \Delta+{\rm tang}(\mu^{-1}\mathcal F, \tilde{v}_1, \dots, \tilde{v}_{n-1}))$
    is log canonical.

    By the Bertini-Koll\'ar theorem, \cite[Theorem 4.8]{Kollar95}, for a general choice of $a_{ij}$
    we have
$(V, \Delta_V+{\rm tang}(\mu^{-1}\mathcal F, \tilde{v}_1, \dots, \tilde{v}_{n-1}))$ is log canonical (we remark that although \cite[Theorem 4.8]{Kollar95} is stated for schemes, the proof works equally well for complex analytic varieties). 
By (\ref{eqn_1}) 
\begin{align*}
K_V+\Delta_V+{\rm tang}(\mu^{-1}\mathcal F, \tilde{v}_1, \dots, \tilde{v}_{n-1}) = \\ \mu^*(K_U+(\Delta+{\rm tang}(\mathcal F, v_1, \dots, v_{n-1}))|_U)
\end{align*}
and we deduce that $(U, (\Delta+{\rm tang}(\mathcal F, v_1, \dots, v_{n-1}))_U$ is log canonical for a general choice of $a_{ij}$.
Another application of the Bertini-Koll\'ar theorem, \cite[Theorem 4.8]{Kollar95}
allows us to conclude that for a general choice of $a_{ij}$, 
$(X, \Delta+{\rm tang}(\mathcal F, v_1, \dots, v_{n-1}))$ is log canonical.

\medskip

We conclude by observing that the linear equivalence $K_X+\D + \Gamma \sim K_{\mathcal F}+ \D + D$ follows from Lemma \ref{lem:normal}. 
\end{proof}

\section{Proof of corollaries}

\begin{proof}[Proof of Corollary \ref{cor:DCC}]
We may apply Theorem \ref{thm:main_intro} with  $D = 0$ to find a $\mathbb Z$-divisor $\Gamma$ so that $(X, \D+ \Gamma)$ is log canonical and $K_X+ \D + \Gamma \sim K_{\mathcal F}+\D$.
We may then conclude by \cite[Theorem 1.3]{HMX12}. 
\end{proof}

The following example shows that the log canonicity assumption is essential. 

\begin{example}
\label{ex_toric_not_lc}
    Let $n \in \mathbb Z_{>0}$ and consider the foliation $\mathcal F$ on $\mathbb P(1, 1, n)$
    defined by the homogeneous 1-form $\omega = X^2ZdX+Y^2ZdY-(X^3+Y^3)dZ$.
    Since the weight of $\omega$ is $3+n$ we see that $\mathcal O(K_{\mathcal F}) \cong \mathcal O(1)$, hence $K_{\mathcal F}^2 =\frac{1}{n}$.

However, in a neighbourhood of the unique singular point on $\mathbb P(1, 1, n)$, the foliation is a quotient of the foliation defined (in appropriate coordinates) by the vector field $y^2\partial_x-x^2\partial_y$.
    This vector field has linear part $=0$, and is therefore not log canonical.
\end{example}

\begin{proof}[Proof of Corollary \ref{cor:boundedness}] 
For any $(X,\D,\mathcal F)$ as in the statement, we may apply Theorem \ref{thm:main_intro} with  $D = 0$ to find a $\mathbb Z$-divisor $\Gamma$ so that $(X, \D+ \Gamma)$ is log canonical and $K_X+ \D + \Gamma \sim K_{\mathcal F}+\D$.
The pairs $(X, \D+ \Gamma)$ are log canonical models of general type with fixed volume and $\mathrm{coeff}(\D+\Gamma) \in \{1\}$; hence they are bounded, see  \cite{HMX14}. 

The conclusion follows now applying \cite[Proposition 3.36]{C7}.
\end{proof}

\begin{proof}[Proof of Corollary \ref{cor:MMP}]
By Theorem \ref{thm:main_intro} there is a $\mathbb Z$-divisor $\Gamma$ so that $(X, \D+ \Gamma)$ is log canonical and $K_X+ \D + \Gamma \sim K_{\mathcal F}+\D$.
By  \cite{HX11} we can run a $K_X+\D+\Gamma$ MMP which will also be a $K_{\mathcal F}+\D$-MMP. Since $\cF$ and $\D$ are $G$-invariant, so is $\Gamma$. Given that $G$ is a connected group, any MMP is $G$-equivariant (see \cite[\S 4.4]{BF21}). At each step, there is an action of $G$ on $(X_i, \D_i, \cF_i)$, which leaves $\D_i$ and $\cF_i$ invariant and for which $X_i \setminus \D_i$ is an open orbit. Hence the variety is still log homogeneous.
\end{proof}

\begin{remark}
\label{rem_scaling}
    Since toroidal varieties (and in particular log homogeneous varieties) are potentially klt the above proof also shows that we can run a $K_{\mathcal F}+\Delta$-MMP with scaling of any ample divisor and that such an MMP terminates if either $\Delta$ is big or $K_{\mathcal F}+\Delta$ is not pseudo-effective, cf. \cite{BCHM}.
\end{remark}

\begin{proof}[Proof of Corollary \ref{cor:adjoint_minimal}]
By Theorem \ref{thm:main_intro} there is a $\mathbb Z$-divisor $\Gamma$ so that $(X, \Gamma)$ is log canonical and $K_X + \Gamma \sim K_{\mathcal F}$. Then
$$
tK_\cF + (1-t)K_X \sim K_X+t\Gamma
$$
is nef and big with volume $v$. Since $t <1$, the pair $(X,t\Gamma)$ is klt with coefficients in the DCC set $\{t\}$.  By \cite{MST}, such pairs are bounded and we conclude by \cite[Proposition 3.36]{C7}.
\end{proof}

\begin{proof}[Proof of Corollary \ref{cor:adjoint_Fano}]
By Theorem \ref{thm:main_intro} there is a $\mathbb Z$-divisor $\Gamma$ so that $(X, \Gamma)$ is log canonical and $K_X + \Gamma \sim K_{\mathcal F}$. Then
$$
-(tK_\cF + (1-t)K_X) \sim -(K_X+t\Gamma)
$$
is ample.  Since $ t \le (1-\varepsilon)$, the pair $(X, t \Gamma)$ is $\varepsilon^2$-lc. In fact, for any divisor $E$ over $X$,
\[
a(E,X,t\Gamma)=a(E,X,0)-t\mult_E(\Gamma).
\]
Since $(X,\Gamma)$ is lc, $\mult_E(\Gamma)\le a(E,X,0)+1$, hence
\[
a(E,X,t\Gamma)\ge (1-t)a(E,X,0)-t.
\]
As $X$ is $\varepsilon$-lc, $a(E,X,0)\ge -1+\varepsilon$, so
\[
a(E,X,t\Gamma)\ge (-1+\epsilon)(1-t)-t = -1+(1-t)\varepsilon \ge -1+\varepsilon^2,
\]
where the last inequality holds because $\varepsilon \le 1-t$. Thus $(X,t\Gamma)$ is $\varepsilon^2$-lc.

Applying \cite{Bir21}, we see that the pairs $(X,t\Gamma)$ form a bounded family and then we obtain the boundedness of foliations by \cite[Proposition 3.36]{C7}. In this last step, it is important that $t >0$.
\end{proof}

\begin{proof}[Proof of Corollary \ref{cor:Fano}]
By Theorem \ref{thm:main_intro} there is a $\mathbb Z$-divisor $\Gamma$ so that $(X, \Gamma)$ is log canonical and $K_X + \Gamma \sim K_{\mathcal F}$. Then
$$
-(tK_\cF + (1-t)K_X) \sim -(K_X+t\Gamma)
$$
is ample. For $\varepsilon >0$ small enough  $(X,(t-\varepsilon)\Gamma)$ is klt and $-K_X-t\Gamma - \varepsilon \Gamma$ is still ample.
\end{proof}

\bibliographystyle{alpha}
\bibliography{bib}

\end{document}